\nonstopmode \numberwithin{equation}{section}
\newtheorem*{theoA}{Theorem A}
\newtheorem*{theoB}{Theorem B}
\newtheorem*{theoC}{Theorem C}
\theoremstyle{plain}
\newtheorem{prop}{Proposition}
\newtheorem{ques}{Question}
\newtheorem{conj}{Conjecture}
\theoremstyle{definition}
\newtheorem{cor}{Corollary}[section]
\newtheorem{thm}{Theorem}[section]
\newtheorem{lem}{Lemma}[section]
\newtheorem{prob}{Problem}
\newtheorem{rem}{Remark}[section]
\theoremstyle{plain}
\newtheorem*{lemA}{Lemma A}
\newcounter{minutes}\setcounter{minutes}{\time}
\newcounter{hours}\setcounter{hours}{\time}
\newcounter {own}
\def\theown {\thesection       .\arabic{own}}
\newenvironment{pf}[1][]{%
	\vskip 3mm
	\noindent
	\ifthenelse{\equal{#1}{}}%
	{{\slshape Proof. }}%
	{{\slshape #1.} }%
}%
{\qed\bigskip}
\newcounter{alphabet}
\def\be{\begin{equation}}
	\def\ee{\end{equation}}
\newcommand{\bee}{\begin{enumerate}}
	\newcommand{\eee}{\end{enumerate}}
\newcommand{\blem}{\begin{lem}}
	\newcommand{\elem}{\end{lem}}
\newcommand{\bthm}{\begin{thm}}
	\newcommand{\ethm}{\end{thm}}
\newcommand{\bcor}{\begin{cor}}
	\newcommand{\ecor}{\end{cor}}
\newcommand{\beg}{\begin{examp}}
	\newcommand{\eeg}{\end{examp}}
\newcommand{\begs}{\begin{examples}}
	\newcommand{\eegs}{\end{examples}}
\newcommand{\bdefn}{\begin{defn}}
	\newcommand{\edefn}{\end{defn}}
\newcommand{\bprob}{\begin{prob}}
	\newcommand{\eprob}{\end{prob}}
\newcommand{\bei}{\begin{itemize}}
	\newcommand{\eei}{\end{itemize}}
\newcommand{\bcon}{\begin{conj}}
	\newcommand{\econ}{\end{conj}}
\newcommand{\bcons}{\begin{conjs}}
	\newcommand{\econs}{\end{conjs}}
\newcommand{\bprop}{\begin{prop}}
	\newcommand{\eprop}{\end{prop}}
\newcommand{\br}{\begin{rem}}
	\newcommand{\er}{\end{rem}}
\newcommand{\brs}{\begin{rems}}
	\newcommand{\ers}{\end{rems}}
\newcommand{\bo}{\begin{obser}}
	\newcommand{\eo}{\end{obser}}
\newcommand{\bos}{\begin{obsers}}
	\newcommand{\eos}{\end{obsers}}
\newcommand{\bpf}{\begin{pf}}
	\newcommand{\epf}{\end{pf}}
\newcommand{\ba}{\begin{array}}
	\newcommand{\ea}{\end{array}}
\newcommand{\beq}{\begin{eqnarray}}
	\newcommand{\beqq}{\begin{eqnarray*}}
		\newcommand{\eeq}{\end{eqnarray}}
	\newcommand{\eeqq}{\end{eqnarray*}}
\begin{document}

\title{The Bohr inequality on a simply connected domain and its applications}

\author{Sabir Ahammed}
\address{Sabir Ahammed, Department of Mathematics, Jadavpur University, Kolkata-700032, West Bengal, India.}
\email{sabira.math.rs@jadavpuruniversity.in}

\author{Molla Basir Ahamed$^{*}$}
\address{Molla Basir Ahamed, Department of Mathematics, Jadavpur University, Kolkata-700032, West Bengal, India.}
\email{mbahamed.math@jadavpuruniversity.in}

\author{Partha Pratim Roy}
\address{Partha Pratim Roy, Department of Mathematics, Jadavpur University, Kolkata-700032, West Bengal, India.}
\email{pproy.math.rs@jadavpuruniversity.in}

\subjclass[{AMS} Subject Classification:]{Primary 30A10, 30B10, 30C75, 30H05, 30C35, 40A30 Secondary 30C45}
\keywords{Analytic functions, simply connected domain, $K$- quasiconformal maps, Harmonic maps, Hadamard product,  Bohr inequality.}

\def\thefootnote{}
\footnotetext{ {\tiny File:~\jobname.tex,
printed: \number\year-\number\month-\number\day,
          \thehours.\ifnum\theminutes<10{0}\fi\theminutes }
} \makeatletter\def\thefootnote{\@arabic\c@footnote}\makeatother

\begin{abstract} 
In this article, we first establish a generalized Bohr inequality and examine its sharpness for a class of analytic functions $f$ in a simply connected domain $\Omega_\gamma,$ where $0\leq \gamma<1$  with a sequence $\{\varphi_n(r) \}^{\infty}_{n=0}$ of non-negative continuous functions defined on $[0,1)$ such that the series $\sum_{n=0}^{\infty}\varphi_n(r)$ converges locally uniformly on  $[0,1)$. Our results represent twofold generalizations corresponding to those obtained for the classes  $\mathcal{B}(\mathbb{D})$ and $\mathcal{B}(\Omega_{\gamma})$, where \begin{align*}
	\Omega_{\gamma}:=\biggl\{z\in \mathbb{C}: \bigg|z+\dfrac{\gamma}{1-\gamma}\bigg|<\dfrac{1}{1-\gamma}\biggr\}.
\end{align*} As a convolution counterpart, we determine the Bohr radius for hypergeometric function on  $ \Omega_{\gamma} $. Lastly, we establish a generalized Bohr inequality and its sharpness for the class of $ K $-quasiconformal, sense-preserving harmonic maps of the form $f=h+\overline{g}$  in $\Omega_{\gamma}.$
\end{abstract}

\maketitle
\pagestyle{myheadings}
\markboth{ S. Ahammed, M. B. Ahamed and P. P. Roy}{The Bohr inequality for a simply connected domain and its applications}

\section{Introduction}
Let $\mathbb{D}(a;r):=\{z\in \mathbb{C}: |z-a|<r\}$ and let $\mathbb{D}:=\mathbb{D}(0;1)$ be the open unit disk in the complex plane $\mathbb{C}.$ For a given simply connected domain $\Omega$ containing the unit disk $\mathbb{D},$ we denote $\mathcal{H}\left(\Omega\right)$ a class of analytic functions on $\Omega$, and $\mathcal{B}\left(\Omega\right)$ be the class of functions $f\in\mathcal{H}\left(\Omega\right) $ such that $f\left(\Omega\right)\subseteq \mathbb{D}.$ The Bohr radius for the family $\mathcal{B}\left(\Omega\right)$ is defined to be the positive real number $B_{\Omega}\in (0,1)$ given by (see \cite{Fournier-2010}) 
\begin{align*}
B_{\Omega}=\sup \{r\in (0,1): M_f(r)\leq 1\; \mbox{for all}\;f(z)=\sum_{n=0}^{\infty}a_nz^n\in\mathcal{B}\left(\Omega\right), z\in \mathbb{D} \},
\end{align*}
where $M_f(r):=\sum_{n=0}^{\infty}|a_n|r^n$ is the associated majorant series for $f\in \mathcal{B}\left(\Omega\right)$. If $\Omega=\mathbb{D},$ then it is well-known that $B_{\mathbb{D}}=1/3,$ and it is described precisely as follows.
\begin{theoA}(see \cite{Bohr-1914})\label{Th-1.1}
	If $f\in \mathcal{B}\left(\mathbb{D}\right)$, then $M_f(r)\leq 1$ for $ 0\leq r\leq 1/3 $. The number $1/3$ is best possible.
\end{theoA}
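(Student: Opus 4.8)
The plan is to reduce Theorem~A to two ingredients: a sharp estimate for the Taylor coefficients of a bounded analytic function, together with an elementary one-variable inequality, after which the optimality of $1/3$ will follow by testing the disk automorphisms of $\mathbb{D}$. First I would establish that if $f(z)=\sum_{n=0}^{\infty}a_nz^n\in\mathcal{B}(\mathbb{D})$ then $|a_n|\le 1-|a_0|^2$ for every $n\ge 1$. To do this, fix $n\ge 1$, put $\omega=e^{2\pi i/n}$, and consider the averaged function $F(z):=\frac1n\sum_{k=0}^{n-1}f(\omega^k z)$, which again maps $\mathbb{D}$ into $\mathbb{D}$ since $|F(z)|\le\frac1n\sum_{k=0}^{n-1}|f(\omega^k z)|<1$. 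Because $\frac1n\sum_{k=0}^{n-1}\omega^{km}$ equals $1$ when $n\mid m$ and $0$ otherwise, one has $F(z)=a_0+a_nz^n+a_{2n}z^{2n}+\cdots=G(z^n)$ for some $G\in\mathcal{B}(\mathbb{D})$ with $G(0)=a_0$ and $G'(0)=a_n$; the Schwarz--Pick lemma applied to $G$ at the origin then gives $|a_n|=|G'(0)|\le 1-|G(0)|^2=1-|a_0|^2$.

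Next I would sum the majorant series. Since $f(\mathbb{D})\subseteq\mathbb{D}$ we have $a:=|a_0|\in[0,1)$, and for $0\le r<1$ the coefficient bound yields
\[
M_f(r)=|a_0|+\sum_{n=1}^{\infty}|a_n|r^n\le a+(1-a^2)\sum_{n=1}^{\infty}r^n=a+(1-a^2)\frac{r}{1-r}.
\]
Dividing by $1-a>0$, the inequality $a+(1-a^2)\frac{r}{1-r}\le 1$ is equivalent to $(1+a)r\le 1-r$, i.e.\ to $r\le 1/(2+a)$; since $a<1$ forces $1/(2+a)>1/3$, the estimate $M_f(r)\le 1$ holds for every $r\in[0,1/3]$.

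For the sharpness, for $a\in(0,1)$ I would take $f_a(z)=\dfrac{a-z}{1-az}\in\mathcal{B}(\mathbb{D})$, whose expansion is $f_a(z)=a-(1-a^2)\sum_{n=1}^{\infty}a^{n-1}z^n$, so that
\[
M_{f_a}(r)=a+(1-a^2)\frac{r}{1-ar}=1+\frac{(1-a)\bigl((1+2a)r-1\bigr)}{1-ar}.
\]
For a fixed $r>1/3$ we have $(1+2a)r>1$ once $a$ is sufficiently close to $1$, whence $M_{f_a}(r)>1$; thus no radius exceeding $1/3$ is admissible, and $1/3$ is best possible.

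The only step with genuine content is the first one, and there the crucial move is the averaging operator $F(z)=\frac1n\sum_k f(\omega^k z)$, which reduces the estimate on the general coefficient $a_n$ to the case $n=1$, where the Schwarz--Pick lemma applies directly; the summation and the sharpness computation are then routine.
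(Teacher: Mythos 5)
Your proof is correct. The paper itself does not prove Theorem~A (it is quoted from Bohr's 1914 paper, with only the sharpness example $f_a(z)=(a-z)/(1-az)$ mentioned), but your argument is the standard one and follows exactly the template the paper uses for its generalizations: the Wiener coefficient bound $|a_n|\le 1-|a_0|^2$ (the $\gamma=0$ case of Lemma~A), summation of the resulting geometric majorant, and the Möbius test functions $f_a$ with $a\to 1^-$ (the $\gamma=0$ case of the function $h_a$ in \eqref{eeee-2.6}).
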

The inequality $M_f(r)\leq 1$, which is known as the classical Bohr inequality for $f\in \mathcal{B}\left(\mathbb{D}\right)$,  fails to  hold for any $r>1/3$. In view of this, the constant $ 1/3 $ is best possible. This can be seen by considering the function $f_a(z)=(a-z)/(1-az)$ and by taking $a\in (0,1)$ such that $a$ is sufficiently close to $1$. Theorem A was originally obtained by H. Bohr in \cite{Bohr-1914} for $ 0\leq r\leq 1/6 $. The optimal value $ 1/3 $, which is called the Bohr radius for the disk case, was later established independently by M. Riesz, I. Schur, and F.W. Wiener. Over the past three decades there has been significant interest in Bohr-type inequalities. See \cite{Dixon & BLMS & 1995,Paulsen-PAMS-2004,Blasco-2010,Djakov-JA-20000,Kay & Pon & AASFM & 2019,Abu-Muhanna-survey-2016,Ponnusamy-Shmidt-Starkov-JMAA-2024,Bhowmik-Das-JMAA-2018,Aha-Allu-BMMSS-2022,Ahamed-AMP-2021,Ahamed-CVEE-2021,Hamada- Honda-BMMS-2024,Abu-CVEE-2010,Kay & Pon & Sha & MN & 2018,Huang-Liu-Ponnusamy-MJM-2021,Kayumov-Ponnusamy-JMAA-2018,Liu-Ponnusamy-BMMS-2019,Ponnusamy-Vijayakumar} and the references there.\vspace{1.2mm}
 
The Bohr radius has also been investigated by many researchers in various multidimensional spaces. For example, in $ 1997 $, generalizing a classical one-variable theorem of Bohr, Boas and Khavinson \cite{Boas-Khavinson-PAMS-1997} showed that if an $ n $-variable power series has modulus less than $ 1 $ in the unit polydisc, then the sum of the moduli of the terms is less than $ 1 $ in the polydisc of radius $ 1/(\sqrt{3}n) $. The result of Boas and Khavinson generates later extensive research in multidimensional Bohr inequalities. For different aspects of multidimensional Bohr inequalities, we refer to the articles \cite{Aizn-PAMS-2000,SABIR-MBA-CVEE-2023,Hamada-IJM-2009,Defant-AM-2012,Galicer-TAMS-2020,Liu-Ponnusamy-PAMS-2021,Lin-Liu-Pon-AMS-2023,Kumar-PAMS-2022,Kumar-Manna-JMAA-2023} and references therein. \vspace{1.2mm}

Determination of the Bohr radius has been extended to various classes of function for domains beyond the unit disk. For example, in \cite{Ali- Ng-CVEE-2018}, Ali and Ng investigated Bohr phenomenon in the Poincar\'e disk model of the hyperbolic plane and develops the hyperbolic Bohr inequality for the class of functions mapping $\mathbb{D}^h$ to any concentric subdisk of $\mathbb{D}$, where $\mathbb{D}^h$ is the unit disk in the hyperbolic disk model. As a consequence, the hyperbolic Bohr radius is obtained for analytic self-maps of the hyperbolic unit disk $\mathbb{D}^h$. The hyperbolic Bohr inequality is studied for the class of functions mapping $\mathbb{D}^h$ into a given set lying in an open half-plane of the unit disk. In \cite{Muhanna-Ali-Math.Nachr-2013} Muhanna and Ali have studied Bohr phenomenon to three classes of analytic functions mapping the unit disk respectively into the right half-plane, the slit region, and to the exterior of the unit disk using the properties of the hyperbolic metric. In \cite{Aytuna-Djakov-BLMS-2013}, Bohr property of bases is established in the space of entire functions defined on a compact subset of $\mathbb{C}^n,$ $n\geq 1$. In \cite{Kaptanoglu- Sadık-RJMP-2005}, Kaptanoglu and Sadik have studied Bohr Radii for class of functions defined in a domain bounded by the ellipse. The value of the Bohr radius estimated for elliptical domains of small eccentricity also shows that these domains do not exhibit the Bohr Phenomenon when the eccentricity is large. The Bohr radius was obtained in \cite{Muhana-MN-2017} for analytic functions that map from the unit disk to the punctured unit disk, showing that the radius same as the classical Bohr radius $1/3$. \vspace{1.2mm}

Finding the Bohr radius for a class of functions defined in an arbitrary domain is a difficult task and interesting also. In this paper, our aim is to establish generalized Bohr inequalities for the classes of analytic, and harmonic mappings defined in simply connected domain $\Omega_{\gamma}$ into $\mathbb{C}.$ The paper is organized as follows: In Section 2, we have studied generalized version of Bohr inequality and its sharpness for the class of analytic functions $f\in \mathcal{B}\left(\Omega_{\gamma}\right)$, $0\leq \gamma<1$ with the help of a sequence $\{\varphi_n(r) \}^{\infty}_{n=0}$ of non-negative continuous function in $[0,1)$ such that the series $\sum_{n=0}^{\infty}\varphi_n(r)$ converges locally uniformly on the interval $[0,1)$. By several examples, we show that our results represent twofold generalizations corresponding to those existing results which are obtained for the classes $\mathcal{B}(\mathbb{D})$ and $\mathcal{B}(\Omega_{\gamma})$. In Section 3, convolution counterpart of Bohr radius on simply connected domain $ \Omega_{\gamma} $ is established which generalized the result on unit disk $ \mathbb{D} $. In Section 4, we focus on a class of harmonic mappings, namely, $ K $-quasiconformal sense-preserving harmonic mappings, and derive generalized Bohr inequalities based on existing results. The proof of the main results has been discussed in detail in each section.
\section{Generalization of the Bohr inequality and its sharpness for the class $\mathcal{B}\left(\Omega_{\gamma}\right)$ }
In order to present our results in this section, we need to introduce some basic notations. Following the recent study of the Bohr inequality  (see \cite{Kayumov-Khammatova-Ponnusamy-MJM-2022}), let $\mathcal{F}$ denote the set of all sequence $\varphi=\{\varphi_n(r) \}^{\infty}_{n=0}$ of non-negative continuous function in $[0,1)$ such that the series $\sum_{n=0}^{\infty}\varphi_n(r)$ converges locally uniformly on the interval $[0,1)$. The aim of this section is to discuss not only refined Bohr inequalities and their sharpness for the class $ \mathcal{B}(\Omega_{\gamma}) $, but also to provide some applications of the main results, generalizing several existing findings. To facilitate understanding, we adopt the notations
\begin{align*}
	\Phi_N(r):=\sum_{n=N}^{\infty}\varphi_n(r)\;\;\mbox{and}\;\; 
	\mathcal{A}\left(f_0,\varphi,r\right):=\sum_{n=1}^{\infty}|a_n|^{2n}\left(\dfrac{\varphi_{2n}(r)}{1+|a_0|}+\Phi_{2n+1}(r)\right),
\end{align*}
where $f_0(z)=f_(z)-f(0)$  and $N:=\{1,2,\dots\}.$ In particular, when $\varphi_n(r)=r^n,$ the formulation of $\mathcal{A}\left(f_0,\varphi,r\right)$ simplifies to
\begin{align*}
	\mathcal{A}\left(f_0,r\right):=\left(\dfrac{1}{1+|a_0|}+\dfrac{r}{1-r}\right)\sum_{n=1}^{\infty}|a_n|^{2n}r^{2n}
\end{align*}
This serves as a tool for refining the traditional Bohr inequality.\vspace{1.2mm}

The Bohr inequalities have been investigated for a general class $ \mathcal{F} $ of power series by replacing $ r^n $ by $ \varphi_n(r) $ in the majorant series, where $ \varphi_n(r) $ is a non-negative continuous function on $ [0,1) $ such that $ \sum_{n=0}^{\infty}\varphi_n(r) $ converges locally uniformly with respect to $ r\in [0, 1) $. See the articles \cite{Ponnusamy-Vijayakumar-Wirths-RM-2020,Ponnusamy-Vijayakumar-Wirths-HJM-2021,Kayumov-Khammatova-Ponnusamy-MJM-2022,Chen-Liu-Ponnusamy-RM-2023} and reference therein. For $ 0\leq \gamma<1 $, we consider the simply connected domain  $ \Omega_{\gamma} $ defined by 
 \begin{align*}
 	\Omega_{\gamma}:=\biggl\{z\in \mathbb{C}: \bigg|z+\dfrac{\gamma}{1-\gamma}\bigg|<\dfrac{1}{1-\gamma}\biggr\}.
 \end{align*}
It is clear that $ \Omega_{\gamma} $ contains the unit disk $ \mathbb{D} $, i.e., $\mathbb{D}\subseteq \Omega_{\gamma}$ for $0\leq \gamma<1$. In the following result, sharp coefficient bound for $ f\in\mathcal{B}(\Omega_{\gamma}) $ was obtained and this bound will be instrumental in our study.
\begin{figure}[!htb]
	\begin{center}
		\includegraphics[width=0.55\linewidth]{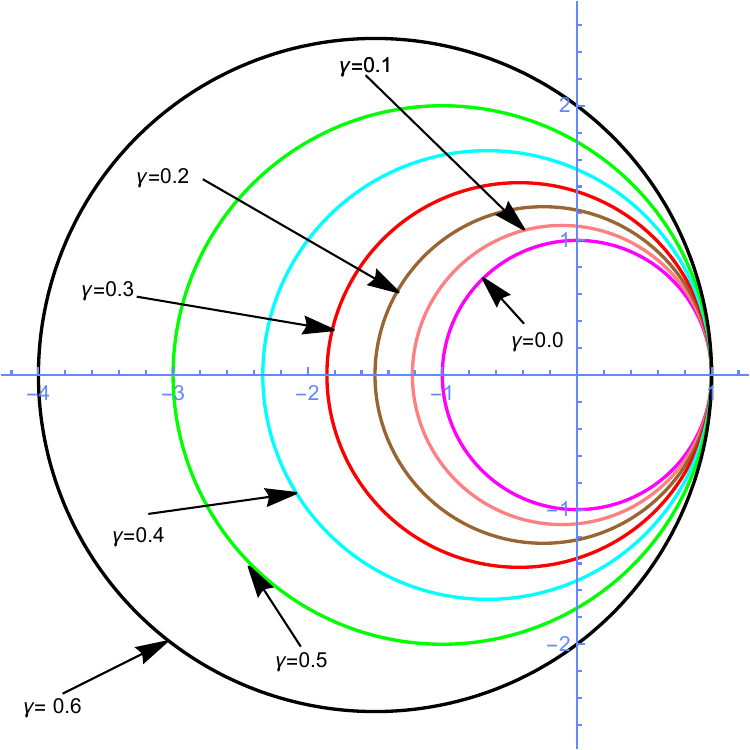}
	\end{center}
	\caption{The graphs of $C_{\gamma}:=\partial \Omega_{\gamma}$ are exhibited for different values of $\gamma=0.0,$ $0.1,$ $0.2,$ $0.3,$ $0.4,$ $0.5$ and $0.6$.}
\end{figure}
\begin{lemA}(see \cite[Lemma 2]{Evdoridis-Ponnusamy-Rasila-RM-2021})
	For $ 0\leq \gamma<1 $, let $ f $ be analytic in $ \Omega_{\gamma} $, bounded by $ 1 $, with power series representation $f(z)=\sum_{n=0}^{\infty}a_nz^n$ on $ \mathbb{D} $. Then 
	\begin{align*}
		|a_n|\leq\dfrac{1-|a_0|^2}{1+\gamma}\;\; \mbox{for}\;\; n\geq 1.
	\end{align*}
\end{lemA}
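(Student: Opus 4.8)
\emph{Proof plan.} I would pull the estimate back to the unit disk through the explicit Riemann map of $\Omega_\gamma$, then reduce the bound on the general Taylor coefficient $a_n$ to the sharp coefficient estimate on $\mathbb{D}$ by a short power-series computation; here, for a power series $g$, let $[w^n]g$ denote the coefficient of $w^n$ in $g$. Since $\Omega_\gamma$ is the Euclidean disk of radius $1/(1-\gamma)$ centred at $-\gamma/(1-\gamma)$, a routine Möbius computation shows that
\[
\Psi(z):=\frac{(1+\gamma)z}{1+\gamma z}
\]
maps $\mathbb{D}$ conformally onto $\Omega_\gamma$ with $\Psi(0)=0$, and that its inverse is $\Psi^{-1}(w)=w/(1+\gamma-\gamma w)=\sum_{j\ge1}\gamma^{j-1}(1+\gamma)^{-j}w^{j}$ for $w$ near $0$; in particular $\Psi^{-1}$ has non-negative Taylor coefficients at the origin.

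If $|a_0|=1$ then $f$ is a unimodular constant by the maximum principle, so $a_n=0$ for $n\ge1$ and the inequality is trivial; assume therefore $|a_0|<1$. Put $H:=f\circ\Psi$. Because $f$ is bounded by $1$ on all of $\Omega_\gamma=\Psi(\mathbb{D})$ — this is exactly where the hypothesis on the enlarged domain enters — $H$ is an analytic self-map of $\mathbb{D}$ with $H(0)=a_0$; writing $H(z)=\sum_{m\ge0}h_mz^m$, the classical coefficient inequality for bounded analytic functions gives $|h_m|\le1-|a_0|^2$ for every $m\ge1$. (If needed I would insert its short proof: the rotational average $\tfrac1m\sum_{k=0}^{m-1}H(e^{2\pi ik/m}z)$ equals $\Gamma(z^{m})$ for some analytic self-map $\Gamma$ of $\mathbb{D}$ with $\Gamma(0)=h_0$ and $\Gamma'(0)=h_m$, and the Schwarz--Pick lemma applied to $\Gamma$ at the origin yields $|h_m|\le1-|h_0|^{2}$.)

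Composing the two series, $f(w)=H(\Psi^{-1}(w))=\sum_{m\ge0}h_m(\Psi^{-1}(w))^{m}$ for $w$ near $0$, and since $(\Psi^{-1}(w))^{m}=O(w^{m})$ only the terms $1\le m\le n$ contribute to $[w^n]f$ when $n\ge1$; hence
\[
a_n=\sum_{m=1}^{n}h_m\,e_{m,n},\qquad e_{m,n}:=[w^n]\,(\Psi^{-1}(w))^{m}\ \ge\ 0,
\]
the non-negativity coming from that of the coefficients of $\Psi^{-1}$. Therefore $|a_n|\le(1-|a_0|^{2})\sum_{m=1}^{n}e_{m,n}=(1-|a_0|^{2})\,[w^n]\sum_{m\ge1}(\Psi^{-1}(w))^{m}$, again using $(\Psi^{-1}(w))^{m}=O(w^{m})$. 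Since $\sum_{m\ge1}(\Psi^{-1}(w))^{m}=\Psi^{-1}(w)/(1-\Psi^{-1}(w))$, substituting $\Psi^{-1}(w)=w/(1+\gamma-\gamma w)$ collapses this generating function to $\tfrac{1}{1+\gamma}\cdot\tfrac{w}{1-w}=\tfrac{1}{1+\gamma}\sum_{n\ge1}w^{n}$, whose $n$-th coefficient equals $1/(1+\gamma)$ for every $n\ge1$. Thus $|a_n|\le(1-|a_0|^{2})/(1+\gamma)$, which is the asserted bound.

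The argument has no genuinely deep step. The two places that require care are the Möbius verification that $\Psi(\mathbb{D})=\Omega_\gamma$ (and the computation of $\Psi^{-1}$), and the justification that the composed series may be rearranged as an honest power series in $w$ near the origin with only finitely many terms contributing to each coefficient. Once that is granted, it is the non-negativity of the coefficients $e_{m,n}$ that permits the passage from the triangle inequality to the exact identity $\sum_{m=1}^{n}e_{m,n}=1/(1+\gamma)$ — which is precisely what yields the constant $1/(1+\gamma)$ rather than merely $1$.
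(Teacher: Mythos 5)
The paper does not prove Lemma A at all --- it is quoted verbatim from \cite[Lemma 2]{Evdoridis-Ponnusamy-Rasila-RM-2021} --- so there is no internal proof to compare against. Your argument is correct and complete, and it is essentially the standard route to this estimate: the M\"obius map $\Psi(z)=(1+\gamma)z/(1+\gamma z)$ does send $\mathbb{D}$ onto $\Omega_\gamma$ with $\Psi(0)=0$, Wiener's inequality $|h_m|\le 1-|a_0|^2$ applies to $H=f\circ\Psi$, and the collapse of $\sum_{m\ge 1}(\Psi^{-1}(w))^m$ to $\tfrac{1}{1+\gamma}\tfrac{w}{1-w}$ (which I checked) is exactly what produces the constant $1/(1+\gamma)$.
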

Since $ \Omega_{\gamma} $ contains the unit disk $ \mathbb{D} $ and it will be convenient to study Bohr-type inequalities for analytic, or harmonic mappings or for other type functions on $ \Omega_{\gamma} $.  However, to made some progress, in \cite{Fournier-2010}, Fournier and Ruscheweyh have extended the classical Bohr inequality for the class $\mathcal{B}(\Omega_{\gamma})$ in the following form.
\begin{theoB}(see \cite[Theorem 1]{Fournier-2010})
	For $ 0\leq\gamma<1 $, let $ f\in\mathcal{B}(\Omega_{\gamma}) $ with $ f(z)=\sum_{n=0}^{\infty}a_nz^n $ in $ \mathbb{D} $. Then 
	\begin{align*}
		B_{f}(r):=\sum_{n=0}^{\infty}|a_n|r^n\leq 1\; \mbox{for}\; |z|=r\leq \rho_{\gamma}:=\frac{1+\gamma}{3+\gamma}.
	\end{align*}
\end{theoB}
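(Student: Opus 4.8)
The plan is to feed the sharp coefficient estimate of Lemma~A into the majorant series, sum a geometric series, and thereby reduce the claim to an elementary inequality in the single real parameter $a:=|a_0|$. Since $\mathbb{D}\subseteq\Omega_{\gamma}$ and $f(\Omega_{\gamma})\subseteq\mathbb{D}$, one has $f(\mathbb{D})\subseteq\mathbb{D}$, hence $a\le 1$; and if $a=1$ then Lemma~A forces $a_n=0$ for every $n\ge 1$, so $B_f(r)=1$ and there is nothing to prove. Thus we may assume $a<1$ in what follows.

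First I would use Lemma~A termwise for $n\ge 1$ and sum, obtaining for $|z|=r\in[0,1)$
\begin{align*}
	B_f(r)=a+\sum_{n=1}^{\infty}|a_n|r^n\le a+\frac{1-a^2}{1+\gamma}\sum_{n=1}^{\infty}r^n=a+\frac{1-a^2}{1+\gamma}\cdot\frac{r}{1-r}.
\end{align*}
It is therefore enough to prove that the right-hand side is at most $1$ whenever $r\le\rho_{\gamma}$.

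The next step is a rearrangement. Since $1-r>0$ and $1-a^2=(1-a)(1+a)$, the inequality $a+\dfrac{(1-a^2)r}{(1+\gamma)(1-r)}\le 1$ is equivalent to $(1-a)(1+a)r\le(1+\gamma)(1-a)(1-r)$, and since $1-a>0$ this is the same as $(1+a)r\le(1+\gamma)(1-r)$. Because $1+a\le 2$, it suffices to have $2r\le(1+\gamma)(1-r)$, which rearranges to $(3+\gamma)r\le 1+\gamma$, i.e. $r\le(1+\gamma)/(3+\gamma)=\rho_{\gamma}$. Chaining this with the previous display gives $B_f(r)\le 1$ for $r\le\rho_{\gamma}$, as required.

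I do not anticipate a real obstacle: all the geometry of $\Omega_{\gamma}$ is already encoded in the bound $|a_n|\le(1-|a_0|^2)/(1+\gamma)$ of Lemma~A, and what remains is routine. The only point deserving a moment's attention is the dependence on $a=|a_0|$: the crude bound $1+a\le 2$ is exactly what pushes the worst case to $a\to 1$, where (as noted above) $B_f(r)$ degenerates to $1$, so replacing $(1+a)/2$ by $1$ loses nothing in the final radius. As a sanity check, taking $\gamma=0$ recovers $\rho_0=1/3$, the classical Bohr radius of Theorem~A.
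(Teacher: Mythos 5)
Your proof is correct, and the paper itself does not reprove Theorem B (it is quoted from Fournier--Ruscheweyh); however, your argument --- Lemma A termwise, summing the geometric series, dividing out $1-a$, and bounding $1+a\le 2$ to push the worst case to $a\to 1^-$ --- is exactly the specialization to $\varphi_n(r)=r^n$, $p=1$, $\Lambda\equiv 0$ of the method the paper uses to prove its Theorem~\ref{BS-thm-3.1}, where the step $1+a\le 2$ appears as the $p=1$ case of the inequality $(1-t^p)/(1-t^2)\ge p/2$. So this is essentially the same approach as the paper's own machinery and there is nothing to add.
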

It is easy to see that $\rho_0=1/3=B_{\mathbb{D}}$, which is the classical Bohr radius. Improved Bohr inequalities have established on $ \Omega_{\gamma} $ recently in \cite{Ahamed-AASFM-2022,Evdoridis-Ponnusamy-Rasila-RM-2021}. The Bohr inequality has been studied (see \cite{Allu-Halder-PEMS-2023,Allu-Halder-CMB-2023}) for Banach spaces on the domain $ \Omega_{\gamma} $ with help of a sequence $ \{\varphi_n(r)\}_{n=0}^{\infty} \in \mathcal{F}.$ Moreover, in \cite{Kumar-CVEE-2022}, a generalized Bohr inequality for analytic functions $f$ studied on the domain $\Omega_{\gamma}$ with help of a sequence $ \{\varphi_n(r)\}_{n=0}^{\infty}\in \mathcal{F}$. In fact, this result extends the scope of the study in \cite[Theorem 1]{Kayumov-Khammatova-Ponnusamy-MJM-2022}.

\begin{theoC}\label{BS-thm-3.11}(see \cite[Theorem 2.2]{Kumar-CVEE-2022}) Assume that $p\in (0,2].$ Let $f(z)=\sum_{n=0}^{\infty}a_nz^n$ be analytic function with $|f(z)|\leq 1$ in $\Omega_{\gamma}.$ If $\{ \varphi_n(r)\}\in \mathcal{F}$ satisfies the inequality 
	\begin{align}\label{BS-eq-3.6}
		(1+\gamma)\varphi_0(r)>\dfrac{2}{p}\sum_{n=1}^{\infty}\varphi_n(r), 
	\end{align}
	then the following sharp inequality holds:
	\begin{align*}
		\varphi_0(r)|a_0|^p+\sum_{n=1}^{\infty}|a_n|\varphi_n(r)\leq \varphi_0(r)
	\end{align*}
	for $|z|=r\leq R_{\gamma}(p),$ where $R_{\gamma}(p)$ is the minimal positive root in $(0,1)$ of the equation  
	\begin{align}\label{BS-eq-3.2}
		\dfrac{2}{p}\sum_{n=1}^{\infty}\varphi_n(r)=(1+\gamma)\varphi_0(r).
	\end{align}
	In the case when 
	\begin{align*}
		(1+\gamma)\varphi_0(r)<\dfrac{2}{p}\sum_{n=1}^{\infty}\varphi_n(r)
	\end{align*}
	in some interval $( R_{\gamma}(p), R_{\gamma}(p)+\epsilon)$, the number $ R_{\gamma}(p)$ cannot be improved. If the function $\varphi_n(t)$ $(n\geq 0) $ are smooth, then the last condition is equivalent to the inequality 
	\begin{align}\label{BS-eqq-2.3}
		(1+\gamma)\varphi^{\prime}_0(t)<\dfrac{2}{p}\sum_{n=1}^{\infty}\varphi^{\prime}_n(t).
	\end{align}
\end{theoC}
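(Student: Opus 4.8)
The plan is to reduce the inequality to a single one-variable estimate via the sharp coefficient bound of Lemma A, and then to read off sharpness from a Blaschke pull-back, mimicking the disc case. Put $a:=|a_0|\in[0,1)$ and $S(r):=\sum_{n=1}^{\infty}\varphi_n(r)$. Since each $\varphi_n(r)\ge0$, Lemma A gives
\[
\varphi_0(r)|a_0|^p+\sum_{n=1}^{\infty}|a_n|\varphi_n(r)\ \le\ \Theta_r(a):=\varphi_0(r)\,a^p+\frac{1-a^2}{1+\gamma}\,S(r),
\]
so it suffices to show $\Theta_r(a)\le\varphi_0(r)=\Theta_r(1)$ for all $a\in[0,1)$ whenever $r\le R_\gamma(p)$. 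Differentiating, $\Theta_r'(a)=a\bigl(p\,\varphi_0(r)\,a^{p-2}-\tfrac{2}{1+\gamma}\,S(r)\bigr)$; because $p\le2$ the factor $a^{p-2}$ is non-increasing on $(0,1]$, hence so is the bracketed expression there. By hypothesis $(1+\gamma)\varphi_0-\tfrac2p S$ is positive near $r=0$, and $R_\gamma(p)$ is its least positive zero, so this difference is $\ge0$ on $[0,R_\gamma(p)]$; equivalently the bracket is $\ge0$ at $a=1$, and therefore $\ge0$ on all of $(0,1]$. Thus $\Theta_r$ is non-decreasing and $\Theta_r(a)\le\Theta_r(1)=\varphi_0(r)$, which is the asserted bound.

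For sharpness, fix $r\in\bigl(R_\gamma(p),R_\gamma(p)+\epsilon\bigr)$ and, for $a\in(\gamma,1)$, consider $f(z):=\psi_a\bigl((1-\gamma)z+\gamma\bigr)$ with $\psi_a(w):=(a-w)/(1-aw)$; since $z\mapsto(1-\gamma)z+\gamma$ maps $\Omega_\gamma$ onto $\mathbb{D}$, we have $f\in\mathcal{B}(\Omega_\gamma)$. A short computation gives $a_0=(a-\gamma)/(1-a\gamma)$ and $|a_n|=\tfrac{1-a_0^2}{1+\gamma}\,q^{\,n-1}$ for $n\ge1$, where $q=(a_0+\gamma)/(1+\gamma)\in(0,1)$, so the left-hand side of the claimed inequality, regarded as a function $\Lambda(t)$ of $t:=a_0\in(0,1)$, equals $\varphi_0(r)\,t^p+\tfrac{1-t^2}{1+\gamma}\sum_{n=1}^{\infty}q(t)^{\,n-1}\varphi_n(r)$ with $q(t)=(t+\gamma)/(1+\gamma)$. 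One has $\Lambda(1)=\varphi_0(r)$, and forming the difference quotient at $t=1$ and letting $t\uparrow1$ — using $q(t)\uparrow1$ with monotone convergence to get $\sum q(t)^{n-1}\varphi_n(r)\to S(r)$, while the prefactor $1-t^2\to0$ annihilates any contribution from differentiating the series — one obtains the one-sided derivative $\Lambda'(1^-)=p\,\varphi_0(r)-\tfrac{2}{1+\gamma}S(r)$. The reverse inequality assumed on $(R_\gamma(p),R_\gamma(p)+\epsilon)$ says precisely that this is negative, so $\Lambda(t)>\varphi_0(r)$ for $t<1$ close enough to $1$; the corresponding $f\in\mathcal{B}(\Omega_\gamma)$ then violates the inequality at radius $r$, showing $R_\gamma(p)$ cannot be enlarged. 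When the $\varphi_n$ are smooth, the sign of $(1+\gamma)\varphi_0(r)-\tfrac2p S(r)$ just to the right of $R_\gamma(p)$ is, by a first-order Taylor expansion at the zero $R_\gamma(p)$, governed by the sign of $(1+\gamma)\varphi_0'-\tfrac2p\sum_{n=1}^{\infty}\varphi_n'$ there, which is the stated equivalent condition.

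The routine ingredients are the Taylor-coefficient computation for the extremal function and the monotonicity bookkeeping of the first paragraph. The one genuinely delicate point I expect is the evaluation of $\Lambda'(1^-)$ in the sharpness step: term-by-term differentiation of $\sum q^{\,n-1}\varphi_n(r)$ at $q=1$ need not be legitimate, since the formal derivative may diverge. This is circumvented by noticing that the whole series is multiplied by $1-t^2\to0$, so only the boundedness of the series — immediate from $0\le q(t)^{n-1}\varphi_n(r)\le\varphi_n(r)$ — together with monotone convergence is needed to pass to the limit.
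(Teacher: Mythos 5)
Your proposal is correct and follows essentially the same route as the paper, which obtains Theorem C as the $\Lambda\equiv 0$ case of Theorem \ref{BS-thm-3.1}: Lemma A reduces the claim to a scalar inequality in $a=|a_0|$, and sharpness comes from the same M\"obius-type extremal function $h_a=H_1\circ H_2$ with the same coefficients \eqref{eee-2.7} (your $q=(a_0+\gamma)/(1+\gamma)$ equals $a(1-\gamma)/(1-a\gamma)$). Your monotonicity argument for $\Theta_r$ is just a repackaging of the paper's inequality $(1-t^p)/(1-t^2)\ge p/2$, and your one-sided derivative $\Lambda'(1^-)$ replaces the paper's first-order expansion in $1-a$, with the same limiting justification for the series term.
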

In the study of Bohr phenomenon, one aspect involves the examination of refined Bohr inequalities and proof of their sharpness for certain classes of functions (see \cite{Liu-Ponnusamy-PAMS-2021,SABIR-MBA-CVEE-2023,Liu-Liu-Ponnusamy-BDSM-2021}). While refining a Bohr inequality often yields different Bohr radii for various classes of functions, there remains a curiosity in establishing a refined Bohr inequality for the same class without altering the radius. In this section, we are interested in obtaining a refined version of Theorem C. With this in mind, it's appropriate to raise the following question.
\begin{ques}\label{BS-prob-2.1}
	Can we obtain a sharp refined version of  Theorem C keeping the constant $ R_{\gamma}(p)$ unchanged?
\end{ques}
 We obtain  the following result as a refined version of Theorem C answering the Question \ref{BS-prob-2.1} completely. It is noteworthy that the radius $ R_\gamma(p) $ in Theorem \ref{BS-thm-3.1} is independent of the function $ \Lambda : [0, 1]\to [0, 1] $. In particular, if $ \Lambda\equiv 0 $ for all $ r\in [0, 1] $, then Theorem C can be obtained from Theorem \ref{BS-thm-3.1}.
\begin{thm}\label{BS-thm-3.1}
	Assume that $p\in (0,2]$ and $ \Lambda : [0, 1]\to [0, 1] $ be a function. Let $f(z)=\sum_{n=0}^{\infty}a_nz^n$ be analytic function with $|f(z)|\leq 1$ in $\Omega_{\gamma}.$ If $\{ \varphi_n(r)\}\in \mathcal{F}$ and satisfies \eqref{BS-eq-3.6},
	then the following sharp inequality holds:
	\begin{align*}
	\mathcal{M}^{\Lambda(r)}_f\left(\varphi(r),p,\gamma\right):=\varphi_0(r)|a_0|^p+\sum_{n=1}^{\infty}|a_n|\varphi_n(r)+\Lambda(r)\mathcal{A}\left(f_0,\varphi,r\right)\leq \varphi_0(r)
	\end{align*}
	for $|z|=r\leq R_\gamma(p),$ where $R_\gamma(p)$ is the minimal positive root in $(0,1)$ of the equation \eqref{BS-eq-3.2}.  
	In the case when 
	\begin{align}\label{eeeq-2.4}
		(1+\gamma)\varphi_0(r)<\dfrac{2}{p}\sum_{n=1}^{\infty}\varphi_n(r)
	\end{align}
	in some interval $(R_{\gamma}(p),R_{\gamma}(p)+\epsilon)$, the number $R_{\gamma}(p)$  cannot be improved. If the function $\varphi_n(t)$ $(n\geq 0) $ are smooth, then \eqref{eeeq-2.4} is equivalent to the inequality \eqref{BS-eqq-2.3}.
\end{thm}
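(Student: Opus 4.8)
The plan is to obtain Theorem~\ref{BS-thm-3.1} from Theorem~C by showing that, up to the radius $R_\gamma(p)$, the extra term $\Lambda(r)\,\mathcal{A}\left(f_0,\varphi,r\right)$ can be absorbed into the slack between the left and right sides of the inequality in Theorem~C. First I would fix $f(z)=\sum_{n=0}^\infty a_nz^n$ analytic in $\Omega_\gamma$ with $|f|\le 1$, write $a_0=f(0)$, and apply Lemma~A, which gives $|a_n|\le (1-|a_0|^2)/(1+\gamma)$ for $n\ge 1$. Set $r\le R_\gamma(p)$, so that by \eqref{BS-eq-3.2} one has $\frac{2}{p}\sum_{n\ge 1}\varphi_n(r)\le (1+\gamma)\varphi_0(r)$, i.e. $\Phi_1(r)=\sum_{n\ge1}\varphi_n(r)\le \frac{p}{2}(1+\gamma)\varphi_0(r)$. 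The idea is then to estimate the three pieces separately: the Bohr-type sum $\sum_{n\ge1}|a_n|\varphi_n(r)\le \frac{1-|a_0|^2}{1+\gamma}\Phi_1(r)$, the term $\varphi_0(r)|a_0|^p$ kept as is, and the new quantity $\mathcal{A}(f_0,\varphi,r)=\sum_{n\ge1}|a_n|^{2n}\bigl(\frac{\varphi_{2n}(r)}{1+|a_0|}+\Phi_{2n+1}(r)\bigr)$, which I would bound using $|a_n|^{2n}\le\bigl(\frac{1-|a_0|^2}{1+\gamma}\bigr)^{2n}\le\bigl(\frac{1-|a_0|^2}{1+\gamma}\bigr)^{2}$ for $n\ge1$ together with $\sum_{n\ge1}\bigl(\frac{\varphi_{2n}(r)}{1+|a_0|}+\Phi_{2n+1}(r)\bigr)\le \bigl(\frac{1}{1+|a_0|}+1\bigr)\Phi_1(r)$ (after reindexing and noting $\Phi_{2n}\le\Phi_1$ etc.), so that $\mathcal{A}(f_0,\varphi,r)\le \bigl(\frac{1-|a_0|^2}{1+\gamma}\bigr)^{2}\frac{2+|a_0|}{1+|a_0|}\Phi_1(r)$. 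Since $\Lambda(r)\le 1$, this whole contribution is at most of the same order as the Bohr sum times $\frac{1-|a_0|^2}{1+\gamma}$.

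The decisive computation is then to show $\varphi_0(r)|a_0|^p+\frac{1-|a_0|^2}{1+\gamma}\Phi_1(r)+\bigl(\frac{1-|a_0|^2}{1+\gamma}\bigr)^{2}\frac{2+|a_0|}{1+|a_0|}\Phi_1(r)\le\varphi_0(r)$. Writing $t=|a_0|\in[0,1)$ and substituting the bound $\Phi_1(r)\le\frac{p}{2}(1+\gamma)\varphi_0(r)$, after dividing by $\varphi_0(r)$ this reduces to the purely numerical inequality
\begin{align*}
 t^p+\frac{p}{2}(1-t^2)+\frac{p}{2}\cdot\frac{(1-t^2)^2}{1+\gamma}\cdot\frac{2+t}{1+t}\le 1,\qquad 0\le t<1,\ 0<p\le 2.
\end{align*}
Here the first two terms already satisfy $t^p+\frac{p}{2}(1-t^2)\le 1$ — this is precisely the estimate underlying Theorem~C, obtained from $t^p\le 1-\frac{p}{2}(1-t^2)$, which is the Bernoulli-type inequality $(1-x)^{p/2}\ge 1-\frac{p}{2}x$ applied with $x=1-t^2$ — so I must verify that the third term does not overshoot the remaining gap $1-t^p-\frac{p}{2}(1-t^2)$. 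The cleanest route is to note $(1-t^2)^2\cdot\frac{2+t}{1+t}=(1-t)^2(1+t)(2+t)\le (1-t)^2\cdot 6$ and compare with a lower bound for the gap; I expect one needs a slightly more careful concavity argument near $t=1$, where both the gap and the new term are $O((1-t)^2)$, and this matching of second-order behavior at $t\to1^-$ is where the argument is tightest. That said, because $\mathcal{A}$ carries a factor $(1-t^2)^2$ rather than $(1-t^2)$, and the gap in Theorem~C is $\asymp (1-t)^2$ near $t=1$, the inequality should close with room to spare for $p\le 2$; the constant $R_\gamma(p)$ is untouched because we used \eqref{BS-eq-3.2} only as an inequality valid on $[0,R_\gamma(p)]$.

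For the sharpness part there is essentially nothing new to prove: the extremal configuration is the same one that shows $R_\gamma(p)$ is optimal in Theorem~C. Concretely, one takes the functions $f_a(z)=(a-z)/(1-az)\in\mathcal{B}(\Omega_\gamma)$ (or the appropriate variant adapted to $\Omega_\gamma$ as in \cite{Fournier-2010,Kumar-CVEE-2022}) with $a\to 1^-$; for these, $|a_0|=a\to1$, so $\mathcal{A}(f_0,\varphi,r)\to 0$ (it is $O((1-a^2)^2)$), hence $\mathcal{M}^{\Lambda(r)}_f(\varphi(r),p,\gamma)$ and the classical left-hand side $\varphi_0(r)|a_0|^p+\sum_{n\ge1}|a_n|\varphi_n(r)$ have the same limiting behavior, and the latter already exceeds $\varphi_0(r)$ for $r>R_\gamma(p)$ under condition \eqref{eeeq-2.4} by the argument in Theorem~C. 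Thus the number $R_\gamma(p)$ cannot be improved, and under smoothness the condition \eqref{eeeq-2.4} is equivalent to \eqref{BS-eqq-2.3} by differentiating \eqref{BS-eq-3.2} at $t=R_\gamma(p)$. The only genuine obstacle is the second-order matching in the numerical inequality above; everything else is bookkeeping with Lemma~A and reindexing of the tails $\Phi_N$.
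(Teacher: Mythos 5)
Your sharpness argument is essentially the paper's own: one takes the $\Omega_\gamma$-adapted automorphisms $h_a$ with $a\to1^-$, observes that $\mathcal{A}(f_0,\varphi,r)=O((1-a)^2)$ so the refined term does not disturb the first-order expansion in $(1-a)$, and inherits the optimality of $R_\gamma(p)$ from Theorem C. That part is fine.

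The forward inequality, however, contains two concrete errors. First, the tail estimate $\sum_{n\ge1}\Phi_{2n+1}(r)\le\Phi_1(r)$ is false in general: $\sum_{n\ge1}\Phi_{2n+1}(r)=\sum_{m\ge3}\lfloor(m-1)/2\rfloor\,\varphi_m(r)$, which exceeds $\Phi_1(r)$ as soon as the weight of $\varphi$ sits at large indices (take all $\varphi_m\equiv0$ except one with $m$ large). Each individual $\Phi_{2n+1}$ is at most $\Phi_1$, but you are summing infinitely many of them. Second, and more fundamentally, your ``decisive'' numerical inequality
\begin{align*}
 t^p+\frac p2(1-t^2)+\frac p2\cdot\frac{(1-t^2)^2}{1+\gamma}\cdot\frac{2+t}{1+t}\le 1
\end{align*}
is false: at $t=0$, $p=2$ it reads $1+\tfrac{2}{1+\gamma}\le1$. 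The failure is structural, not a matter of sharpening constants. For $p=2$ the Bernoulli gap $1-t^p-\frac p2(1-t^2)$ coming from \eqref{eee-2.5} is \emph{identically zero}, so there is no slack at all to absorb the refined term into; and for $p<2$ the gap is $\frac p2(2-p)(1-t)^2+O((1-t)^3)$ near $t=1$, while your bound for the extra term behaves like $\frac{3p}{1+\gamma}(1-t)^2$ there, and $\frac{3p}{1+\gamma}>\frac p2(2-p)$ for every $p\in(0,2]$ and $\gamma\in[0,1)$, so the second-order matching you were counting on does not close either. The refined term cannot be paid for out of the slack in $t^p\le1-\frac p2(1-t^2)$; it has to come out of the slack in the coefficient estimates themselves (the termwise bound $|a_n|\le\frac{1-|a_0|^2}{1+\gamma}$ of Lemma A is never attained for all $n$ simultaneously). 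The paper does not decouple the three pieces as you do: it keeps them together in a single expression $\varphi_0(r)+\frac{1-a^2}{1+\gamma}H^{\Lambda(r)}_{\gamma}(a,p)$ and argues the nonpositivity of $H^{\Lambda(r)}_{\gamma}(a,p)$ on $r\le R_\gamma(p)$ (its published justification examines the limit $a\to1^-$, which is precisely the regime where your expansion also works). Your attempt to make the uniform-in-$a$ statement explicit by termwise absorption is exactly where the argument breaks, and it would need a genuinely finer coefficient lemma (in the spirit of the $\mathcal{B}(\mathbb{D})$ case treated by Ponnusamy--Vijayakumar--Wirths) to repair.
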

As a consequence of Theorem \ref{BS-thm-3.1}, in particular, if  $\gamma=0$ and $\Lambda(r)=1$ for all $ r\in [0, 1] $, then we get the following corollary.
\begin{cor}(see \cite[Theorem 1]{Ponnusamy-Vijayakumar-Wirths-HJM-2021})
	Assume that $p\in (0,2]$ and let $f(z)=\sum_{n=0}^{\infty}a_nz^n$ be analytic function with $|f(z)|\leq 1$ in $\mathbb{D}.$ If $\{ \varphi_n(r)\}\in \mathcal{F}$ satisfies \eqref{BS-eq-3.6} with $\gamma=0$,
then the following sharp inequality holds:
\begin{align*}
	\varphi_0(r)|a_0|^p+\sum_{n=1}^{\infty}|a_n|\varphi_n(r)+\mathcal{A}\left(f_0,\varphi,r\right)\leq \varphi_0(r)
\end{align*}
for $|z|=r\leq R_0(p),$ where $R_0(p)$ is the minimal positive root in $(0,1)$ of the equation $ 	\varphi_0(r)=\dfrac{2}{p}\sum_{n=1}^{\infty}\varphi_n(r). $  In the case when
\begin{align}\label{BSP-eq-2.5}
	\varphi_0(r)<\dfrac{2}{p}\sum_{n=1}^{\infty}\varphi_n(r)
\end{align}
in some interval $(R_{0}(p),R_{0}(p)+\epsilon)$, the number $R_{0}(p)$  cannot be improved. If the function $\varphi_n(t)$ $(n\geq 0) $ are smooth, then the condition \eqref{BSP-eq-2.5} is equivalent to the inequality \begin{align*}
	\varphi^{\prime}_0(t)<\dfrac{2}{p}\sum_{n=1}^{\infty}\varphi^{\prime}_n(t).
\end{align*}
\end{cor}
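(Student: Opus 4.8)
The plan is to run the proof of Theorem~C and then show that the new summand $\Lambda(r)\,\mathcal{A}(f_0,\varphi,r)$ is harmless. Put $t:=|a_0|$. If $t=1$, then $|f|\le 1$ on the connected open set $\Omega_\gamma$ has an interior maximum at $0$, so $f\equiv a_0$, whence $a_n=0$ for $n\ge 1$, $\mathcal{A}(f_0,\varphi,r)=0$, and the asserted bound reads $\varphi_0(r)\le\varphi_0(r)$. Assume $t<1$. Since $\Lambda$ is valued in $[0,1]$ and $\mathcal{A}(f_0,\varphi,r)\ge 0$, we have $\Lambda(r)\,\mathcal{A}(f_0,\varphi,r)\le\mathcal{A}(f_0,\varphi,r)$; hence it suffices to prove the case $\Lambda\equiv 1$, and the choice $\Lambda\equiv 0$ then recovers Theorem~C, as remarked before the statement. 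So the goal is
\[
\varphi_0(r)\,t^{p}+\sum_{n=1}^{\infty}|a_n|\,\varphi_n(r)+\mathcal{A}(f_0,\varphi,r)\le\varphi_0(r),\qquad |z|=r\le R_\gamma(p).
\]

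The input is Lemma~A: $|a_n|\le q:=\frac{1-t^{2}}{1+\gamma}<1$ for all $n\ge 1$. This gives $\sum_{n\ge 1}|a_n|\varphi_n(r)\le q\,\Phi_1(r)$, and (using only $|a_n|^{2n}\le q^{2n}$, so that the factor $\tfrac1{1+t}$ is retained) $\mathcal{A}(f_0,\varphi,r)\le\tfrac1{1+t}\sum_{n\ge 1}q^{2n}\varphi_{2n}(r)+\sum_{n\ge 1}q^{2n}\Phi_{2n+1}(r)$, a convergent series since $q<1$ and $\Phi_{2n+1}(r)\le\Phi_1(r)<\infty$. By \eqref{BS-eq-3.6} and the definition of $R_\gamma(p)$ as the least positive root of \eqref{BS-eq-3.2}, for $0\le r\le R_\gamma(p)$ one has $\tfrac2p\Phi_1(r)\le(1+\gamma)\varphi_0(r)$, i.e. $\Phi_1(r)\le\tfrac{p(1+\gamma)}{2}\varphi_0(r)$, with equality at $R_\gamma(p)$; this also bounds $\Phi_2(r),\Phi_3(r)$ and $\sum_{n\ge 1}\varphi_{2n}(r)$ by $\Phi_1(r)$. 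In particular $\sum_{n\ge 1}|a_n|\varphi_n(r)\le\tfrac p2(1-t^{2})\varphi_0(r)$, so, dividing by $\varphi_0(r)>0$, the whole inequality reduces to estimating the $\mathcal{A}$-contribution against the one-variable quantity $g(t):=1-t^{p}-\tfrac p2(1-t^{2})$ (together with whatever slack is left in the previous bound): one is led to a single inequality $\Theta_\gamma(t)\le g(t)$ on $[0,1)$, where $\Theta_\gamma\ge 0$ is built from $q^{2}=\bigl(\tfrac{1-t^{2}}{1+\gamma}\bigr)^{2}$ times bounded rational factors of $t$, so $\Theta_\gamma(t)=O\bigl((1-t)^{2}\bigr)$ as $t\to 1^{-}$.

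For $g$ itself: $g(0)=1-\tfrac p2\ge 0$ (here $p\le 2$), $g(1)=0$, $g'(t)=pt\bigl(1-t^{p-2}\bigr)\le 0$ on $(0,1)$ for $p\le 2$ (so $g\ge 0$ on $[0,1)$), and $g(1)=g'(1)=0$, $g''(1)=p(2-p)\ge 0$, so $g(t)=\tfrac{p(2-p)}{2}(1-t)^{2}+o\bigl((1-t)^{2}\bigr)$ near $t=1$. The crux — the step I expect to be the real work — is matching the constants in $\Theta_\gamma(t)\le g(t)$: the naive estimate of $\mathcal{A}(f_0,\varphi,r)$, replacing $\varphi_{2n}(r)$ and $\Phi_{2n+1}(r)$ by $\Phi_1(r)$ and $\tfrac1{1+t}$ by $1$, overshoots by a bounded factor and already fails at $\gamma=0$; indeed $\mathcal{A}(f_0,\varphi,r)\le\varphi_0(r)g(t)$ is not true on its own (take $p$ near $2$, $\varphi_1\equiv 0$, $t=0$). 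The remedy is to estimate $\sum_{n\ge 1}|a_n|\varphi_n(r)$ and $\mathcal{A}(f_0,\varphi,r)$ jointly, paying for the quadratic terms in $\mathcal{A}$ out of the second-order deficit of $\sum_{n\ge 1}|a_n|\varphi_n(r)$ below $q\,\Phi_1(r)$; this is where a refined coefficient estimate in the spirit of the Ponnusamy--Vijayakumar--Wirths refinements enters (the $\Omega_\gamma$-analogue of $\sum_{n\ge 1}|a_n|r^{n}+\tfrac1{1+|a_0|}\sum_{n\ge 1}|a_n|^{2}r^{2n}\le(1-|a_0|^{2})\tfrac r{1-r}$), keeping the true tails $\Phi_{2n+1}(r)$ and the factor $\tfrac1{1+t}$ and exploiting that only the $n=1$ term of $\mathcal{A}$ has order $(1-t)^{2}$ while $n\ge 2$ contributes $O\bigl((1-t)^{4}\bigr)$. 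Once $\Theta_\gamma$ is pinned down sharply enough, $\Theta_\gamma(t)\le g(t)$ follows by factoring $(1-t)^{2}$ out of both sides and checking a residual inequality of rational functions, using $p\in(0,2]$ and $1+\gamma\ge 1$.

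Sharpness costs nothing. Since $\mathcal{A}(f_0,\varphi,r)\ge 0$, $\mathcal{M}^{\Lambda(r)}_f(\varphi(r),p,\gamma)\ge\varphi_0(r)t^{p}+\sum_{n\ge 1}|a_n|\varphi_n(r)$, so any $(f,r)$ witnessing that $R_\gamma(p)$ is best possible for Theorem~C witnesses it here. Explicitly, with $\psi_0(z)=\dfrac z{1+\gamma-\gamma z}$ (a conformal map of $\Omega_\gamma$ onto $\mathbb{D}$ fixing $0$) and $f_a=\dfrac{a-\psi_0}{1-a\psi_0}\in\mathcal{B}(\Omega_\gamma)$, one computes $|a_n(f_a)|=\dfrac{1-a^{2}}{1+\gamma}\Bigl(\dfrac{a+\gamma}{1+\gamma}\Bigr)^{\,n-1}$ $(n\ge 1)$, so Lemma~A becomes asymptotically sharp as $a\to 1^{-}$; for $r$ in an interval $(R_\gamma(p),R_\gamma(p)+\epsilon)$ on which \eqref{eeeq-2.4} holds, a first-order expansion in $1-a$ shows $\varphi_0(r)a^{p}+\sum_{n\ge 1}|a_n(f_a)|\varphi_n(r)>\varphi_0(r)$ once $a$ is near $1$, since the coefficient of $(1-a)$ is a positive multiple of $\tfrac2p\Phi_1(r)-(1+\gamma)\varphi_0(r)>0$. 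Finally, for smooth $\varphi_n$ the equivalence of \eqref{eeeq-2.4} near $R_\gamma(p)$ with \eqref{BS-eqq-2.3} at $R_\gamma(p)$ is Taylor's theorem at the point where \eqref{BS-eq-3.2} holds with equality.
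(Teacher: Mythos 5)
Your proposal is incomplete at exactly the point you yourself flag as ``the crux''. The framing steps are fine and agree with the paper (the case $|a_0|=1$, the reduction to $\Lambda\equiv 1$, the application of Lemma A, and the sharpness argument via $h_a$ together with $\mathcal{A}(f_0,\varphi,r)\ge 0$), but the central inequality is never established: you correctly observe that applying $|a_n|\le (1-t^2)/(1+\gamma)$ separately to $\sum_{n\ge 1}|a_n|\varphi_n(r)$ and to $\mathcal{A}(f_0,\varphi,r)$ overshoots, and you then defer the repair to an unspecified ``refined coefficient estimate in the spirit of Ponnusamy--Vijayakumar--Wirths'' which is neither stated nor proved. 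Since the only content of this corollary beyond Theorem C is the extra summand $\mathcal{A}(f_0,\varphi,r)$, leaving that step as a description of what would be needed is a genuine gap, not a detail: to close it you must state and prove the $\varphi$-analogue of $\sum_{n\ge 1}|a_n|r^n+\bigl(\tfrac{1}{1+a}+\tfrac{r}{1-r}\bigr)\sum_{n\ge 1}|a_n|^2r^{2n}\le (1-a^2)\tfrac{r}{1-r}$ and verify that it absorbs the $\mathcal{A}$-term inside the deficit $1-t^p-\tfrac{p}{2}(1-t^2)$.

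For comparison, the paper does not take the route you anticipate: it bounds $|a_n|^{2n}\le\bigl(\tfrac{1-a^2}{1+\gamma}\bigr)^{2n}\le\bigl(\tfrac{1-a^2}{1+\gamma}\bigr)^{2}$, collects everything into $\varphi_0(r)+\tfrac{1-a^2}{1+\gamma}H^{\Lambda(r)}_{\gamma}(a,p)$, and then asserts $H^{\Lambda(r)}_{\gamma}(a,p)\le 0$ for $r\le R_\gamma(p)$ on the sole grounds that $\lim_{a\to 1^-}H^{\Lambda(r)}_{\gamma}(a,p)\le 0$. Your scepticism about the termwise approach is in fact well placed here: $H^{\Lambda(r)}_{\gamma}(a,p)$ contains the nonnegative term $\Lambda(r)\tfrac{1-a^2}{1+\gamma}\sum_{n\ge 1}\bigl(\tfrac{\varphi_{2n}(r)}{1+a}+\Phi_{2n+1}(r)\bigr)$, which vanishes only in the limit $a\to 1^-$; at $r=R_\gamma(p)$, where $\Phi_1(r)=\tfrac{p(1+\gamma)}{2}\varphi_0(r)$, one gets $H^{\Lambda(r)}_{\gamma}(a,p)>0$ for every $a<1$ whenever $\Lambda(r)>0$ and some $\varphi_n$ with $n\ge 2$ is nonzero, so the paper's sufficient condition fails and its limit argument does not establish it. In short: the paper's proof of this corollary has the very defect you predicted the naive estimate would have, and your write-up, while diagnosing the problem correctly, stops short of supplying the refined joint lemma that would actually prove the statement. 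Neither argument, as written, is complete.
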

\subsection{Application of Theorem \ref{BS-thm-3.1}}
As application of Theorem \ref{BS-thm-3.1}, the following results are the counterparts of refined version of Bohr's theorem for the class $\mathcal{B}(\Omega_{\gamma})$. The inequalities in these results are generalization of the Bohr inequalities for the class $\mathcal{B}(\mathbb{D})$.
\begin{itemize}
	\item [1.] Let $\varphi_n(r)=r^n$, and $p=1, 2$, $ \Lambda(r)=1 $ for all $ r\in [0, 1] $ in  Theorem \ref{BS-thm-3.1}.  We obtain the following inequalities which are refined version of \cite[Theorem 2]{Evdoridis-Ponnusamy-Rasila-RM-2021} for the class $\mathcal{B}(\Omega_{\gamma})$
	\begin{align*}
		\sum_{n=0}^{\infty}|a_n|r^n+	\mathcal{A}\left(f_0,r\right)\leq 1\;\;\mbox{ for}\;\; r\leq R_{\gamma}(1)= \dfrac{1+\gamma}{3+\gamma}.
	\end{align*}
	Moreover,
	\begin{align}\label{Eq-2.5}
		|a_0|^2+\sum_{n=1}^{\infty}|a_n|r^n+	\mathcal{A}\left(f_0,r\right)\leq 1\;\;\mbox{ for}\;\; r\leq R_{\gamma}(2)=\dfrac{1+\gamma}{2+\gamma}.
	\end{align}
	Both the radii $ R_{\gamma}(1) $ and $ R_{\gamma}(2) $ are best possible. In particular, for $\gamma=0$, the inequality (3) of \cite[Theorem  2]{Ponnusamy-Vijayakumar-Wirths-RM-2020} coincides with \eqref{Eq-2.5}. \vspace{2mm}
	
	\item [2.] Let $\varphi_{2n}(r)=r^{2n}$ and  $\varphi_{2n+1}(r)=0,$ $n\geq 0$, $ \Lambda(r)=1 $ for all $ r\in [0, 1] $ in  Theorem \ref{BS-thm-3.1}. We see that 
	\begin{align*}
		\mathcal{A}\left(f_0,\varphi,r\right)=\frac{1}{1+|a_0|}\sum_{n=1}^{\infty}|a_n|^{2n}r^{2n}
	\end{align*}
	and the refined inequality 
	\begin{align}\label{Eq-2.6}
		|a_0|^p+\sum_{n=1}^{\infty}|a_{2n}|r^{2n}+\frac{1}{1+|a_0|}\sum_{n=1}^{\infty}|a_n|^{2n}r^{2n}\leq 1
	\end{align}
	holds for $r\leq R^{\gamma}_2(p)$, where $R^{\gamma}_2(p)=\sqrt{\frac{p(1+\gamma)}{2+p(1+\gamma)}}$. The radius $R^{\gamma}_2(p)<1$ is best possible. Note that \eqref{Eq-2.6} is a refined version of the inequality $|a_0|^p+\sum_{n=1}^{\infty}|a_{2n}|r^{2n}\leq 1$ in \cite[p. 967]{Kumar-CVEE-2022} keeping the radius $ R^{\gamma}_2(p) $ unchanged. \vspace{2mm}
	\item [3.] Let  $\varphi_0(r)=1$,  $\varphi_{2n}(r)=0$ and  $\varphi_{2n-1}(r)=r^{2n-1}, $ $n\geq 0$ in  Theorem \ref{BS-thm-3.1}. Then we have
	\begin{align*}
	\mathcal{A}\left(f_0,\varphi,r\right)=\sum_{n=1}^{\infty}|a_{2n-1}|^{4n-2}\frac{r^{4n+1}}{1-r^2}
	\end{align*}
	and we see that
	\begin{align*}
		|a_0|^p+\sum_{n=1}^{\infty}|a_{2n-1}|r^{2n-1}+\Lambda\sum_{n=1}^{\infty}|a_{2n-1}|^{4n-2}\frac{r^{4n+1}}{1-r^2}\leq 1 \;\;\mbox{for}\;\;r\leq R^{\gamma}_3(p),
	\end{align*}
	where $R^{\gamma}_3(p)=\frac{\sqrt{1+p^2(1+\gamma)}-1}{p(1+\gamma)}$. The radius $R^{\gamma}_3(p)(<1)$ (independent of $\Lambda$) is best possible.\vspace{2mm}
	\item [4.] Let  $\varphi_0(r)=1$, $ \Lambda=1 $ for all $ r\in [0, 1] $, $ \varphi_{n}(r)=0$\; for $1\leq n<N$ and $\varphi_{n}(r)=(n+1)r^n,$  $n\geq N$, 
	\begin{align*}
		C^{|a_0|}_n(r)=\frac{(2n+1)r^{2n}}{1+|a_0|}+\frac{r^{2n+1}(2+2n(1-r)-r)}{(1-r)^2}
	\end{align*} in  Theorem \ref{BS-thm-3.1}. Then the refined inequality
	\begin{align*}
	|a_0|^p+\sum_{n=N}^{\infty}(n+1)|a_n|r^n+\Lambda(r)\sum_{n=N}^{\infty}|a_n|^{2n}C^{|a_0|}_n(r)\leq 1,
	\end{align*}
	holds for $r\leq R^{\gamma,4}_{p,N}$, where $R^{\gamma, 4}_{p,N}$ (independent of $\Lambda(r)$) is the smallest positive root in $(0,1)$ of the equation $2r^N(1+N-Nr)-p(1+\gamma)(1-r)^2=0$. The radius $R^{\gamma,4}_{p,N}$ is best possible. In  particular, if $N=1$, then we see that $R^{\gamma,4}_{p,1}=1-\sqrt{\frac{2}{p(1+\gamma)+2}}$ and $R^{\gamma,4}_{p,1}$ is best possible. Further, in particular, if $\gamma=0$, then we see that $R^{0,4}_{p,1}=1-\sqrt{\frac{2}{p+2}}=R_2(p)$ in \cite[p. 4]{Kayumov-Khammatova-Ponnusamy-MJM-2022}. Thus our result are two fold generalization corresponding to the results for the class $\mathcal{B}(\mathbb{D})$ and $\mathcal{B}(\Omega_{\gamma})$. \vspace{2mm}
		
		\item [5.] The Lerch trancendent $ 	\Phi(z,s,a) $ is generalization of the Hurwitz-zeta function and polylogarithm function. It is classically defined by 
		\begin{align*}
			\Phi(z,s,a)=\sum_{n=0}^{\infty}\frac{z^n}{(a+n)^s},\;\;\mbox{for}\;|z|<1
		\end{align*}
		and $a\neq0,-1\dots.$ \vspace{1.5mm}
		
		In special case it gives the Poly logarithmic function, \textit{i.e.}
		\begin{align*}
			\Phi(z,n,1)=\frac{Li_n(z)}{z},
		\end{align*}
		where $Li_n(z)$ is the Poly logarithmic function.\vspace{1.2mm}
		 
		Let  $\varphi_0(r)=1, \varphi_{n}(r)=0$\; for $1\leq n<N$ and $\varphi_{n}(r)=n^{\alpha}r^n,$  $n\geq N$ in  Theorem \ref{BS-thm-3.1}. Then refined inequality
			\begin{align*}
				|a_0|^p+\sum_{n=N}^{\infty}|a_n|n^{\alpha}r^n+\Lambda(r)\sum_{n=N}^{\infty}|a_n|^{2n}\bigg(\frac{(2n)^{\alpha}r^{2n}}{1+|a_0|}+\Phi(r,\alpha,2n+1)\bigg)\leq 1 
			\end{align*}
			for $r\leq R^{\alpha,\gamma,5}_{p,N},$
			where $R^{\alpha,\gamma,5}_{p,N}$ is the smallest positive root in $ (0, 1) $ of the equation $p(1+\gamma)-2\Phi(r,-\alpha,N)r^n=0$. The radius $R^{\alpha,\gamma,5}_{p,N}$ is best possible.\vspace{1.2mm} 
			
			In particular cases, the Bohr radius can be determined explicitly and this is shown below.
			\begin{enumerate}
				\item[(i)] For $\alpha=1$, and 
				\begin{align*}
					A^{|a_0|}_n(r):=\frac{2nr^{2n}}{1+|a_0|}+\frac{(1+2n(1-r))r^{2n+1}}{(1-r)^2}.
				\end{align*} we see that the inequality 
				\begin{align*}
					|a_0|^p+\sum_{n=N}^{\infty}|a_n|nr^n+\Lambda(r)\sum_{n=N}^{\infty}|a_n|^{2n}A^{|a_0|}_n(r)\leq 1
				\end{align*}
				holds for $r\leq R^{1,\gamma,5}_{p,N},$ where $R^{1,\gamma,5}_{p,N}$ is the smallest positive root in $ (0, 1) $ of the equation $2r^N(N(1-r)+r)-p(1+\gamma)(1-r)^2=0$. The radius $R^{1,\gamma,5}_{p,N}$ is best possible. 	
				When $N=1$, we see that 
		\begin{align*}
		R^{1,\gamma,5}_{p,1}=\frac{2(p(1+\gamma)+1)-\sqrt{(2(p(1+\gamma)+1))^2-4(1+\gamma)^2p^2}}{2p(1+\gamma)}
		\end{align*} 
		and $R^{1,\gamma,5}_{p,1}$ is best possible. It is worth to mention that for the class $\mathcal{B}(\mathbb{D})$, we have $R^{1,0,5}_{p,1}=\frac{p+1-\sqrt{2p+1}}{p}=R_3(p)$ (see \cite[p. 4]{Kayumov-Khammatova-Ponnusamy-MJM-2022}).\vspace{2mm}
				
				\item[(ii)] For $\alpha=2$, and 
				\begin{align*}
					B^{|a_0|}_n(r):=\frac{4n^2r^{2n}}{1+|a_0|}+\frac{(1+4n(1-r)+4n^2(1-r)^2+r)r^{1+2n}}{(1-r)^3},
				\end{align*} the refined Bohr inequality
				\begin{align*}
					|a_0|^p+\sum_{n=N}^{\infty}|a_n|n^2r^n+\Lambda(r)\sum_{n=N}^{\infty}|a_n|^{2n}B^{|a_0|}_n(r)\leq 1
				\end{align*}
				holds for $r\leq R^{2,\gamma,5}_{p,N},$ where $R^{2,\gamma,5}_{p,N}$ is the smallest positive root in $ (0, 1) $ of the equation $2r^N(N(1-r)+r)-p(1+\gamma)(1-r)^2=0$. The radius $R^{2,\gamma,5}_{p,N}$ is best possible. Moreover, when $N=1$, we see that 
				\begin{align*}
					R^{2,\gamma,5}_{p,1}=\frac{2(p(1+\gamma)+1)-\sqrt{(2(p(1+\gamma)+1))^2-4(1+\gamma)^2p^2}}{2p(1+\gamma)}
				\end{align*} 
				and $R^{2,\gamma,5}_{p,1}$ is best possible. It is worth mentioning that for the class $\mathcal{B}(\mathbb{D})$, we have $	R^{2,0,5}_{p,1}=R_4(p)$, where $R^{2,0,5}_{p,1}$ is the minimal positive root in $ (0, 1) $ of the equation $p(1-r)^3-2r(1+r)=0$ (see \cite[p. 5]{Kayumov-Khammatova-Ponnusamy-MJM-2022}).
			\end{enumerate}
\end{itemize}
\begin{proof}[\bf{Proof of Theorem \ref{BS-thm-3.1}}]
	Let $a=|a_0|<1.$ Since $f(z)=\sum_{n=0}^{\infty}a_nz^n$ is an analytic function with $|f(z)|\leq 1$ in $\Omega_{\gamma}$,   using the Lemma A, we obtain
	\begin{align*}
			&\mathcal{M}^{\Lambda(r)}_f\left(\varphi(r),p,\gamma\right)\\&\leq a^p\varphi_0(r)+\dfrac{(1-a^2)}{1+\gamma}\sum_{n=1}^{\infty}\varphi_n(r)+\Lambda(r)\left(\dfrac{1-a^2}{1+\gamma}\right)^2\sum_{n=1}^{\infty}\left(\dfrac{\varphi_{2n}(r)}{1+a}+\Phi_{2n+1}(r)\right)\\&=\varphi_0(r)+\dfrac{(1-a^2)}{1+\gamma}\bigg(\sum_{n=1}^{\infty}\varphi_n(r)+\Lambda(r)\left(\dfrac{1-a^2}{1+\gamma}\right)\sum_{n=1}^{\infty}\left(\dfrac{\varphi_{2n}(r)}{1+a}+\Phi_{2n+1}(r)\right)\\&\quad-\left(\dfrac{1-a^p}{1-a^2}\right)(1+\gamma)\varphi_0(r)\bigg).
	\end{align*} 
	To proceed further in the proof, we utilize the following inequality as provided in \cite{Kayumov-Khammatova-Ponnusamy-MJM-2022}
	\begin{align}\label{eee-2.5}
		\dfrac{1-t^p}{1-t^2}\geq \dfrac{p}{2}\;\;\;\mbox{for\; all}\;\; t\in [0,1)\;\;\mbox{and}\;\; p\in(0,2].
	\end{align}
By \eqref{eee-2.5}, it becomes clear that
	\begin{align*}
			&\mathcal{M}^{\Lambda(r)}_f\left(\varphi(r),p,\gamma\right)\\&\leq \varphi_0(r)+\dfrac{(1-a^2)}{1+\gamma}\bigg(\sum_{n=1}^{\infty}\varphi_n(r)+\Lambda(r)\left(\dfrac{1-a^2}{1+\gamma}\right)\sum_{n=1}^{\infty}\left(\dfrac{\varphi_{2n}(r)}{1+a}+\Phi_{2n+1}(r)\right)\\&\quad-\left(\frac{p}{2}\right)(1+\gamma)\varphi_0(r)\bigg)\\& =\varphi_0(r)+\dfrac{(1-a^2)}{1+\gamma}H^{\Lambda(r)}_{\gamma}(a,p),
	\end{align*}
	where
	\begin{align*}
		H^{\Lambda(r)}_{\gamma}(a,p):=\sum_{n=1}^{\infty}\varphi_n(r)+\Lambda(r)\left(\dfrac{1-a^2}{1+\gamma}\right)\sum_{n=1}^{\infty}\left(\dfrac{\varphi_{2n}(r)}{1+a}+\Phi_{2n+1}(r)\right)-\dfrac{p(1+\gamma)}{2}\varphi_0(r).
	\end{align*}
In order to obtain the desire inequality of the result, it is sufficient to show that $H^{\Lambda(r)}_{\gamma}(a,p)\leq 0$ for $|z|=r\leq R_\gamma(p).$ Choosing $a$ very close to $1$ \textit{i.e.} taking $a\rightarrow 1^{-}$ and using the inequality \eqref{BS-eq-3.6}, we obtain 
	\begin{align*}
		\lim\limits_{a\rightarrow 1^-}H^{\Lambda(r)}_{\gamma}(a,p)=\sum_{n=1}^{\infty}\varphi_n(r)-\dfrac{	p(1+\gamma)}{2}\varphi_0(r)\leq 0.
	\end{align*}
	Thus, the desired inequality   $\mathcal{M}^{\Lambda(r)}_f\left(\varphi(r), p,\gamma\right)\leq \varphi_0(r)$ is obtained for $r\leq R_\gamma(p).$\vspace{1.1mm}
	
	Next part of the proof is to show that the radius $R_\gamma(p)$ is best possible. Henceforth, for $ a\in (0,1) $, we consider function 
	\begin{align}\label{eeee-2.6}
		h_a(z)=\dfrac{a-\gamma-(1-\gamma)z}{1-a\gamma-a(1-\gamma)}\;\;\mbox{for}\;\; z\in \mathbb{D}.
	\end{align} 
	We define two functions $ H_1 $ and $ H_2 $ by $H_1:\mathbb{D}\rightarrow\mathbb{D}$ by $H_1(z)=(a-z)/(1-az)$ and $H_2:\Omega_{\gamma}\rightarrow\mathbb{D}$ by $H_2(z)=(1-\gamma)z+\gamma.$ Then, it is easy to see that the function $h_a=H_1\circ H_2$ maps univalently onto $\mathbb{D}.$ We note that $h_a$ is analytic in $\Omega_{\gamma}$ and $|h_a|\leq1.$ We see that $ h_a $ has the following power series representation
	\begin{align*}
		h_a(z)=\dfrac{a-\gamma-(1-\gamma)z}{1-a\gamma-a(1-\gamma)}=a_0-\sum_{n=1}^{\infty}a_nz^n\;\;\mbox{for}\;\;z\in\mathbb{D},
	\end{align*}   
	where
	\begin{align}\label{eee-2.7}
		a_0=\left(\dfrac{a-\gamma}{1-a\gamma}\right)\;\mbox{and}\; a_n=\left(\dfrac{1-a^2}{a(1-a\gamma)}\left(\dfrac{a(1-\gamma)}{1-a\gamma}\right)^n\right)\;\mbox{for}\; n\in \mathbb{N}.
	\end{align}
	In view of the power series expansion of $h_a$ with the coefficients $ a_n $ as given in \eqref{eee-2.7}, a routine computation gives that
	\begin{align*}
		&\mathcal{M}^{\Lambda(r)}_{h_a}\left(\varphi(r), p, \gamma\right)\\&=\left(\dfrac{a-\gamma}{1-a\gamma}\right)^p\varphi_0(r)+\sum_{n=1}^{\infty}\dfrac{1-a^2}{a(1-a\gamma)}\left(\dfrac{a(1-\gamma)}{1-a\gamma}\right)^n\varphi_n(r)\\&\quad+\Lambda(r)\sum_{n=1}^{\infty}\left(\dfrac{1-a^2}{a(1-a\gamma)}\right)^2\left(\dfrac{a(1-\gamma)}{1-a\gamma}\right)^{2n}\left(\dfrac{\varphi_{2n}(r)}{1+\left(\dfrac{a-\gamma}{1-a\gamma}\right)}+\Phi_{2n+1}(r)\right)\\&=\varphi_0(r)+\dfrac{(1-a)}{1+\gamma}\left(2\sum_{n=1}^{\infty}\varphi_n(r)-p(1+\gamma)\varphi_0(r)\right)\\&\quad+(1-a)\left(\sum_{n=1}^{\infty}\dfrac{(1+a)}{a(1-a\gamma)}\left(\dfrac{a(1-\gamma)}{1-a\gamma}\right)^n\varphi_n(r)-\dfrac{2}{1-\gamma}\sum_{n=1}^{\infty}\varphi_n(r)\right)\\&\quad+(1-a)\left(p\left(\dfrac{1+\gamma}{1-\gamma}\right)+\frac{1}{1-a}\bigg(\left(\dfrac{a-\gamma}{1-a\gamma}\right)^p-1\bigg)\right)\varphi_0(r)\\&\quad+\Lambda(r)\left(\dfrac{1-a^2}{a(1-a\gamma)}\right)^2\sum_{n=1}^{\infty}\left(\dfrac{a(1-\gamma)}{a(1-a\gamma)}\right)^{2n}\left(\dfrac{\varphi_{2n}(r)}{1+\dfrac{a-\gamma}{1-a\gamma}}+\Phi_{2n+1}(r)\right).
	\end{align*}
Next, it is a simple task to verify that
	\[\lim\limits_{a\to 1^-}\left(\sum_{n=1}^{\infty}\dfrac{(1+a)}{a(1-a\gamma)}\left(\dfrac{a(1-\gamma)}{1-a\gamma}\right)^n\varphi_n(r)-\dfrac{2}{1-\gamma}\sum_{n=1}^{\infty}\varphi_n(r)\right)=0,\]
\[\lim\limits_{a\to 1^-}\left(p\left(\dfrac{1+\gamma}{1-\gamma}\right)+\frac{1}{1-a}\bigg(\left(\dfrac{a-\gamma}{1-a\gamma}\right)^p-1\bigg)\right)=0,\]
\[\lim\limits_{a\to 1^-}\left(\dfrac{1-a^2}{a(1-a\gamma)}\right)^2\sum_{n=1}^{\infty}\left(\dfrac{a(1-\gamma)}{a(1-a\gamma)}\right)^{2n}\left(\dfrac{\varphi_{2n}(r)}{1+\dfrac{a-\gamma}{1-a\gamma}}+\Phi_{2n+1}(r)\right)=0.\]
	Thus, it follows that 
	\begin{align*}
	\mathcal{M}^{\Lambda(r)}_{h_a}\left(\varphi(r), p, \gamma\right)=\varphi_0(r)+\dfrac{(1-a)}{1+\gamma}\left(2\sum_{n=1}^{\infty}\varphi_n(r)-p(1+\gamma)\varphi_0(r)\right)+O\left((1-a)^2\right)
	\end{align*}
as $ a $ tends to $ 1^- $. Next, in view of \eqref{eeeq-2.4}, it is easy to see that 
	\[\mathcal{M}^{\Lambda(r)}_{h_a}\left(\varphi(r), p, \gamma\right)>\varphi_0(r)\]
	 when $a$ is very close to $1$ \textit{i.e.} $a\rightarrow 1^{-}$ and $r\in\left(R_\gamma(p),R_\gamma(p)+\epsilon \right)$ which shows that the radius $R_\gamma(p)$ cannot be improved further. This completes the proof.
	\end{proof}
\section{Convolution Counterpart of Bohr Radius on simply connected domain}
For two analytic functions $f$ and $g$ with power series representations $f(z)=\sum_{n=0}^{\infty}a_nz^n$ and $g(z)=\sum_{n=0}^{\infty}$ in $\mathbb{D}$, we define the Hadamard product (or convolution) $f*g$ of $f$ and $g$ by the power series 
    \begin{align*}
 	(f*g)(z)=\sum_{n=0}^{\infty}a_nb_nz^n,\;\;z\in\mathbb{D}.
 	 \end{align*}
 	It is clear that $ * $ satisfies the commutative property $f*g=g*f$.	For the Gaussian hypergeometric function $F(z):={}_2 F_1(a,b;c;z)$ defined by the power series expansion 
 	\begin{align*}
 		F(z)=\sum_{n=0}^{\infty}\gamma_nz^n=\frac{(a)_n(b)_n}{(c)_n}\frac{z^n}{n!}=1+\frac{ab}{c}\frac{z}{1!}+\frac{a(a+1)b(b+1)}{c(c+1)}\frac{z^2}{2!}+\ldots.
 	\end{align*} 
 It is undefined (or infinite) if $ c $ equals a non-positive integer. Here $ (x)_n $ is the (rising) Pochhammer symbol, which is defined by:
 \begin{align*}
 	(x)_n:=\begin{cases}
 		1, \;\;\;\;\;\;\;\;\;\;\;\;\;\;\;\;\;\;\;\;\;\;\;\;\;\;\;\;\;\;\;\;\;\;\; \mbox{if}\; n=0,\vspace{2mm}\\
 		x(x+1)\cdots (x+n-1),\; \mbox{if}\; n>0.
 	\end{cases}
 \end{align*}
 	We consider here the convolution operator of the form 
 	\begin{align*}
 		(F*f)(z)=\sum_{n=0}^{\infty}\gamma_na_nz^n.
 	\end{align*}
 	The Gaussian or ordinary hypergeometric function $ {}_2 F_1(a,b;c;z) $ is a special function represented by a hypergeometric series, encompassing numerous other special functions as specific or limiting cases. It serves as a solution to a second-order linear ordinary differential equation (ODE). Any second-order linear ODE with three regular singular points can be transformed into this equation. Riemann showed that when examining the second-order differential equation for $ {}_2 F_1(a,b;c;z) $ in the complex plane, its characterization (on the Riemann sphere) is defined by its three regular singularities. The differentials of Gaussian hypergeometric function defined by 
 	\begin{align*}
 		\frac{d^n}{dz^n}{}_2 F_1(a,b;c;z)=\frac{(a)_n(b)_n}{(c)_n}{}_2 F_1(a+n,b+n;c+n;z).
 	\end{align*}
 	It is easy to see that the function ${}_2 F_1(a,b;c;z)$ is analytic in $\mathbb{D}$ and in particular, ${}_2 F_1(a,1;1;z)=(1-z)^{-a}$. In the exceptional case $c=-m, m=0,1,2,....,{}_2 F_1(a,b;c;z)$ is defined if $a=-j$ or $b=-j$, where $j=0,1,2...,$ and $j\leq m$. Clearly, if $a=-m$, a negative integer, then $F(a,b;c;z)$ becomes a polynomial of degree $m$ in $z$.\vspace{1.2mm}
 	
 	As an application of Theorem \ref{BS-thm-3.1}, we derive the following result for the Gaussian hypergeometric function, precisely determining the Bohr radius.
 	\begin{thm}\label{Th-3.1}
 	Let $p\in (0,2]$ and  $f(z)=\sum_{n=0}^{\infty}a_nz^n$ be analytic function with $|f(z)|\leq 1$ in $\Omega_{\gamma}.$ Assume that $a,b,c>-1$ such that all $\gamma_n$ have the same sign for $n\geq1$. If $\{ \varphi_n(r)\}\in \mathcal{F}$,
 	then the following sharp inequality holds:
 	\begin{align*}
 	\mathcal{M}^{0}_f\left(\varphi,p,\gamma\right):=|a_0|^p+\sum_{n=1}^{\infty}|\gamma_n||a_n|r^n\leq 1\;\;\mbox{for all}\;r\leq R^{\gamma}_p,
 	\end{align*}
 	where $R^{\gamma}_p$ is the minimal positive root of the equation $|F(a,b;c;x)-1|=(1+\gamma){p}/{2}$. The number $R^{\gamma}_p$ cannot be improved.
 	\end{thm}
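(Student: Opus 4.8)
The plan is to derive Theorem~\ref{Th-3.1} as a direct application of Theorem~\ref{BS-thm-3.1} in the special case $\Lambda\equiv 0$ (so that the term $\mathcal A(f_0,\varphi,r)$ drops out and $\mathcal M^{0}_f$ reduces to $\varphi_0(r)|a_0|^p+\sum_{n\ge 1}|a_n|\varphi_n(r)$), for the sequence $\{\varphi_n(r)\}_{n=0}^\infty$ given by $\varphi_0(r)\equiv 1$ and $\varphi_n(r)=|\gamma_n|\,r^n$ for $n\ge 1$, where $\gamma_n=\dfrac{(a)_n(b)_n}{(c)_n\,n!}$ are the Taylor coefficients of $F(z)={}_2F_1(a,b;c;z)$. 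First I would check that $\{\varphi_n(r)\}\in\mathcal F$: the Taylor series of $F$ has radius of convergence at least $1$ (equal to $1$ when the series does not terminate, and $F$ is a polynomial otherwise), hence so does $\sum_{n\ge0}|\gamma_n|r^n$, and therefore $\sum_{n=0}^\infty\varphi_n(r)=1+\sum_{n\ge1}|\gamma_n|r^n$ converges locally uniformly on $[0,1)$; since $|f|\le 1$ on $\Omega_{\gamma}$, Lemma~A applies exactly as required.

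The decisive step is the closed-form identity
\[
\sum_{n=1}^\infty|\gamma_n|\,r^n=|F(r)-1|\qquad(0\le r<1).
\]
This holds because $\gamma_0=1$, so that $F(r)-1=\sum_{n\ge1}\gamma_n r^n$, and by the hypothesis that all $\gamma_n$ with $n\ge1$ share a common sign, the sum $\sum_{n\ge1}\gamma_n r^n$ is either non-negative (all $\gamma_n\ge0$) or non-positive (all $\gamma_n\le0$); in both cases its modulus equals $\sum_{n\ge1}|\gamma_n|r^n$. With this identity, the hypothesis \eqref{BS-eq-3.6} of Theorem~\ref{BS-thm-3.1} takes the form $(1+\gamma)>\tfrac2p\,|F(r)-1|$, while the equation \eqref{BS-eq-3.2} defining the Bohr radius becomes $|F(r)-1|=(1+\gamma)p/2$, whose minimal positive root in $(0,1)$ is precisely $R^{\gamma}_p$. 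Since $r\mapsto|F(r)-1|=\sum_{n\ge1}|\gamma_n|r^n$ vanishes at $r=0$ and is continuous and increasing, \eqref{BS-eq-3.6} holds throughout $[0,R^{\gamma}_p)$, so Theorem~\ref{BS-thm-3.1} yields $|a_0|^p+\sum_{n=1}^\infty|\gamma_n||a_n|r^n\le 1$ for $r\le R^{\gamma}_p$, as claimed.

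For sharpness I would invoke the smoothness criterion contained in Theorem~\ref{BS-thm-3.1}: each $\varphi_n(t)=|\gamma_n|t^n$ and $\varphi_0(t)\equiv1$ are smooth, so it suffices to check \eqref{BS-eqq-2.3}, namely $(1+\gamma)\varphi_0'(t)<\tfrac2p\sum_{n\ge1}\varphi_n'(t)$, on an interval immediately to the right of $R^{\gamma}_p$. Here $\varphi_0'(t)\equiv0$, while $\sum_{n\ge1}\varphi_n'(t)=\sum_{n\ge1}n|\gamma_n|t^{n-1}>0$ for $t\in(0,1)$ because $\gamma_1=ab/c\neq0$ in the non-degenerate case, so the inequality is immediate. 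Hence $R^{\gamma}_p$ cannot be improved, the extremal being the function $h_a$ of \eqref{eeee-2.6} used in the proof of Theorem~\ref{BS-thm-3.1}.

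The argument is short and presents no real obstacle; the two points needing a word of care are the implicit standing assumption that $|F(r)-1|$ actually attains the level $(1+\gamma)p/2$ somewhere in $(0,1)$, so that $R^{\gamma}_p$ is well defined (otherwise the inequality holds on all of $[0,1)$), and the observation that the hypothesis that all $\gamma_n$ ($n\ge1$) share a common sign is exactly what converts the majorant series $\sum|\gamma_n|r^n$ into the explicit quantity $|F(r)-1|$, thereby making the root equation effective.
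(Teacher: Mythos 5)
Your proposal is correct and follows essentially the same route as the paper: both specialize Theorem~\ref{BS-thm-3.1} to $\Lambda\equiv 0$ with $\varphi_0(r)=1$, $\varphi_n(r)=|\gamma_n|r^n$, use the common-sign hypothesis to identify $\sum_{n\ge1}|\gamma_n|r^n$ with $|F(a,b;c;r)-1|$, and obtain sharpness from the extremal function $h_a$ of \eqref{eeee-2.6}. Your explicit verification that $\{\varphi_n\}\in\mathcal{F}$ and of the smoothness criterion \eqref{BS-eqq-2.3} for sharpness is in fact slightly more careful than the paper, which leaves those checks implicit.
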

 	In particular when $\Omega_{\gamma}=\mathbb{D}$, from Theorem \ref{Th-3.1}, we obtain the following corollary which is exactly a result in \cite{Kayumov-Khammatova-Ponnusamy-MJM-2022}.
 	\begin{cor}(see \cite[Theorem 2]{Kayumov-Khammatova-Ponnusamy-MJM-2022})
 	 Let $f(z)=\sum_{n=0}^{\infty}a_nz^n$ be analytic function belongs to $\mathcal{B}$ and $p\in(0,2]$. Assume that $a,b,c>-1$ such that all $\gamma_n$ have the same sign for $n\geq1$.
 		Then the following sharp inequality holds:
 		\begin{align*}
 			|a_0|^p+\sum_{n=1}^{\infty}|\gamma_n||a_n|r^n\leq 1\;\;\mbox{for all}\;r\leq R_p^{0}=R_p,
 		\end{align*}
 		where $R_p$ is the minimal positive root of the equation $|F(a,b;c;x)-1|={p}/{2}$. The number $R_p$ cannot be improved.
 	\end{cor}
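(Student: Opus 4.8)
The plan is to obtain Theorem~\ref{Th-3.1} as a direct specialization of Theorem~\ref{BS-thm-3.1}. Concretely, I would apply Theorem~\ref{BS-thm-3.1} with $\Lambda\equiv 0$ on $[0,1]$, with $\varphi_0(r)=1$, and with $\varphi_n(r)=|\gamma_n|\,r^n$ for $n\ge 1$, where $\gamma_n=\dfrac{(a)_n(b)_n}{(c)_n\,n!}$ are the Taylor coefficients of $F={}_2F_1(a,b;c;z)$ (so $\gamma_0=1$). With this choice the term $\Lambda(r)\mathcal{A}(f_0,\varphi,r)$ disappears and $\mathcal{M}^{0}_f(\varphi,p,\gamma)$ reduces exactly to $|a_0|^p+\sum_{n\ge1}|\gamma_n||a_n|r^n$, the quantity in the statement. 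One first checks that $\varphi=\{\varphi_n\}_{n\ge0}\in\mathcal F$: each $\varphi_n$ is non-negative and continuous on $[0,1)$, and since the hypergeometric series has radius of convergence at least $1$, the series $\sum_{n\ge0}|\gamma_n|r^n$ converges locally uniformly on $[0,1)$.

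The key step, and the only genuinely substantive one, is the identity $\sum_{n=1}^{\infty}|\gamma_n|\,r^n=\bigl|F(a,b;c;r)-1\bigr|$ for $r\in[0,1)$. This is exactly where the hypothesis that all $\gamma_n$ with $n\ge1$ have the same sign is used: it lets one pull the absolute value outside the sum, and $\sum_{n\ge1}\gamma_n r^n=F(a,b;c;r)-\gamma_0=F(a,b;c;r)-1$. Consequently condition~\eqref{BS-eq-3.6} becomes $|F(a,b;c;r)-1|<(1+\gamma)p/2$ and the defining equation~\eqref{BS-eq-3.2} for the radius $R_\gamma(p)$ of Theorem~\ref{BS-thm-3.1} becomes $|F(a,b;c;x)-1|=(1+\gamma)p/2$; hence $R_\gamma(p)=R^\gamma_p$, and by minimality of this root (together with the monotonicity of $r\mapsto|F(a,b;c;r)-1|$) the hypothesis~\eqref{BS-eq-3.6} is met on $[0,R^\gamma_p)$, so the conclusion of Theorem~\ref{BS-thm-3.1} gives $\mathcal{M}^{0}_f(\varphi,p,\gamma)\le\varphi_0(r)=1$ for $r\le R^\gamma_p$.

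For sharpness I would verify hypothesis~\eqref{eeeq-2.4} of Theorem~\ref{BS-thm-3.1}. Writing $g(r):=\sum_{n\ge1}|\gamma_n|r^n=|F(a,b;c;r)-1|$, this is a power series with non-negative coefficients which, excluding the degenerate case $ab=0$ (where $F\equiv 1$ and the assertion is vacuous), is strictly increasing on $[0,1)$; hence $g(r)>g(R^\gamma_p)=(1+\gamma)p/2$ for every $r\in(R^\gamma_p,1)$, which is precisely~\eqref{eeeq-2.4} on $(R^\gamma_p,R^\gamma_p+\epsilon)$. Theorem~\ref{BS-thm-3.1} then guarantees that $R^\gamma_p$ cannot be improved, the extremal functions being the maps $h_a$ of~\eqref{eeee-2.6} with $a\to 1^-$. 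The proof is therefore short; the only points needing care beyond this bookkeeping are the degenerate situations — $a=0$ or $b=0$ (where the statement is trivial) and the case where $|F(a,b;c;x)-1|$ stays below $(1+\gamma)p/2$ on all of $[0,1)$ (where the equation has no root and the inequality holds throughout $[0,1)$) — both of which are consistent with the stated formulation once $R^\gamma_p$ is read as the minimal positive root in $(0,1)$.
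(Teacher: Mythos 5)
Your proposal is correct and follows essentially the same route as the paper: the corollary is the $\gamma=0$ case of Theorem 3.1, which in turn is obtained from Theorem 2.1 by taking $\Lambda\equiv 0$ and $\varphi_n(r)=|\gamma_n|r^n$, with the same-sign hypothesis giving $\sum_{n\geq 1}|\gamma_n|r^n=|F(a,b;c;r)-1|$ so that the root equation becomes $|F(a,b;c;x)-1|=(1+\gamma)p/2$. Your explicit verification of the sharpness condition via strict monotonicity of $r\mapsto|F(a,b;c;r)-1|$, and your remarks on the degenerate cases, are details the paper leaves implicit but are consistent with its argument.
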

 Further, we have the following remark on Theorem \ref{Th-3.1}.
 	\begin{rem}
 		\begin{enumerate}
 			\item[(i)] 	The Bohr radius can, in certain cases, be explicitly determined. For example, the cases where $b=1=c$ or $a=1=c$ or $a=1=b$, in such cases, we see that $F(z)=(1-z)^{-a}$ or $F(z)=(1-z)^{-b}$ or $F(z)=(1-z)^{-c}$, respectively. Then for all such $F(z)$, we have
 			\begin{align}\label{Eqq-3.1}
 				|F(a,b;c;r)-1|=(1+\gamma)\frac{p}{2}\;\;\mbox{i.e.,}\;R^\gamma_p=1-\bigg(\frac{2}{2+(1+\gamma)p}\bigg)^{\frac{1}{y}},
 			\end{align}
 			where $y=a,b,c$ respectively.\vspace{2mm}
 			\item[(ii)] In particular, for $y=1$ in \eqref{Eqq-3.1}, we observe that
 			\begin{align*}
 				R^\gamma_p=1-\frac{2}{2+(1+\gamma)p}=\frac{(1+\gamma)p}{2+(1+\gamma)p}.
 			\end{align*}
 			Further, when $\gamma=0$, we observe that the radius is  $R^\gamma_p=p/(1+p)=R$ (see \cite[p.6]{Kayumov-Khammatova-Ponnusamy-MJM-2022}). Thus, our result extends those of Kayumov \emph{et al.} \cite{Kayumov-Khammatova-Ponnusamy-MJM-2022} in a large extent.
 		\end{enumerate}
 	\end{rem}
 	\begin{proof}[\bf Proof of Theorem \ref{Th-3.1}]
 	    To prove the result, we apply Theorem \ref{BS-thm-3.1} with $\Lambda$=0. Set $\varphi_n(r)=|\gamma_n|r^n$ and remark that $\gamma_0=1.$ Let us also note that all $\gamma_n$ have the same sign \textit{i.e.,} either $\gamma_n>0$ for all $n$ or $\gamma_n<0$ for all $n$. Thus, we see that 
 	   \begin{align*}
 	   	|F(a,b;c;r)-1|=\sum_{n=1}^{\infty}\varphi_n(r).
 	   \end{align*}
 	   By utilizing Lemma A, we obtain
 	   \begin{align*}
 	   	\mathcal{M}^{0}_f\left(\varphi,p,\gamma\right)\leq&|a_0|^p+\frac{(1-|a_0|^2)}{1+\gamma}\sum_{n=1}^{\infty}|\gamma_n|r^n\\&=1+\frac{(1-|a_0|^2)}{1+\gamma}\bigg(\sum_{n=1}^{\infty}|\gamma_n|r^n-\frac{(1-|a_0|^p)}{(1-|a_0|^2)}(1+\gamma)\bigg).
 	    \end{align*}
 	    Taking into account \eqref{eee-2.5}, we observe that
 	    \begin{align*}
 	    \mathcal{M}^{0}_f\left(\varphi,p,\gamma\right)\leq&1+\frac{(1-|a_0|^2)}{1+\gamma}\bigg(|F(a,b;c;r)-1|-\frac{p}{2}(1+\gamma)\bigg).
 	    \end{align*}
 	    Then we have $\mathcal{M}^{0}_f\left(\varphi,p,\gamma\right)\leq1$ for $r\leq R^\gamma_p$, where $R^\gamma_p$ is the minimal root of the equation
 	    \begin{align*}
 	    	|F(a,b;c;x)-1|=(1+\gamma)\frac{p}{2}.
 	    \end{align*}
 	 Considering the function defined by \eqref{eeee-2.6} with the choice of $\varphi_n(r)=|\gamma_n|r^n$, it can be shown that the radius $R^\gamma_p$ is best possible. This completes the proof.
 	\end{proof}
\section{Generalized Bohr inequalities for the class of $K$-quasiconformal harmonic mappings on $\Omega_{\gamma}$}
 A harmonic mapping in the unit disk $ \mathbb{D} $ is a complex-valued function $ f=u+iv $ of $ z=x+iy $ in $ \mathbb{D} $, which satisfies the Laplace equation $ \Delta f=4f_{z\bar{z}}=0 $, where $ f_{z}=(f_{x}-if_{y})/2 $ and $ f_{\bar{z}}=(f_{x}+if_{y})/2 $ and $ u $ and $ v $ are real-valued harmonic functions in $ \mathbb{D} $. It follows that the function $ f $ admits the canonical representation $ f=h+\bar{g} $, where $ h $ and $ g $ are analytic in $ \mathbb{D} $. The Jacobian  matrix of $ f $ is denoted by 
\begin{equation*}
	J_f(z) = 
	\begin{pmatrix}
		u_x & u_y\\
		v_x & v_y
	\end{pmatrix}.
\end{equation*}
Therefore, the Jacobian of $ f $ is 
$ |J_f(z)| $ of $ f=h+\overline{g} $ is given by $ |J_f(z)|=|h^{\prime}|^2-|g^{\prime}|^2 $. We say that $ f $ is sense-preserving in $ \mathbb{D} $ if $ |J_f(z)|>0 $ in $ \mathbb{D} $. Consequently, $ f $ is locally univalent and sense-preserving in $ \mathbb{D} $ if, and only if, $ |J_f(z)|>0 $ in $ \mathbb{D} $; or equivalently if $ h^{\prime}\neq 0 $ in $ \mathbb{D} $ and the dilation $ \omega_f := \omega=g^{\prime}/h^{\prime} $ has the property that $ |\omega(z)|<1 $ in $ \mathbb{D} $.\vspace{1.2mm}

Harmonic mappings play the natural role in parameterizing minimal surfaces in the context of differential geometry. Planar harmonic mappings have application not only in the differential geometry but also in the various fields of engineering, physics, operations research and other aspects of applied mathematics. The theory of harmonic functions has been used to study and solve fluid flow problems (see \cite{aleman-2012}). The theory of univalent harmonic functions having prominent geometric properties like starlikeness, convexity and close-to-convexity appears naturally while dealing with planner fluid dynamical problems. For instance, the fluid flow problem on a convex domain satisfying an interesting geometric property has been extensively studied by Aleman and Constantin  \cite{aleman-2012}. With the help of geometric properties of harmonic mappings, Constantin and Martin \cite{constantin-2017} have obtained a complete solution of classifying all two dimensional fluid flows.\vspace{1.2mm}

The Bohr's phenomenon for the complex-valued harmonic mappings have been studied extensively by many authors (see \cite{Abu-CVEE-2010,Abu-JMAA-2014,Ahamed-CMFT-2022,Himadri-Vasu-P1,Ghosh-Allu-CVEE-2018,Nirupam-MonatsMath-2019,MBA-Sabir-MJM-2024}). For example, in $ 2016 $, Ali \emph{et al.} \cite{Ali & Abdul & Ng & CVEE & 2016} studied Bohr radius for the stralike log-harmonic mappings and obtain several interesting results.  Bohr-type inequalities for the class of harmonic $ p$-symmetric mappings and also for harmonic mappings with a multiple zero at the origin have been discussed by Huang \emph{et al.} \cite{Huang-Liu-Ponnusamy-MJM-2021}.  The Bohr radius for various classes of functions, for example, locally univalent harmonic mappings, $ K $-quasiconformal mappings, bounded harmonic functions, lacunary series have been studied extensively in \cite{MBA-Sabir-MJM-2024,Ismagilov- Kayumov- Ponnusamy-2020-JMAA,Kayumov-Ponnusamy-JMAA-2018,Kay & Pon & Sha & MN & 2018}. For more intriguing aspects of the Bohr phenomenon, we refer to the articles \cite{Ahamed-AMP-2021,Ahamed-CVEE-2021,Aizn-PAMS-2000,Alkhaleefah-Kayumov-Ponnusamy-PAMS-2019,Allu-JMAA-2021,Bhowmik-Das-JMAA-2018,Evdoridis-Ponnusamy-Rasila-IM-2018,Kay & Pon & AASFM & 2019,Kayumov-CRACAD-2020,Liu-Ponnusamy-BMMS-2019,Kayumov-Khammatova-Ponnusamy-JMAA-2021} and references therein.\vspace{1.5mm} 

In this section, with the help of sequence $\{ \varphi_n(r)\}_{n=0}^{\infty}$ of continuous functions in $r\in [0, 1)$, we study generalized version as well as improved versions of the Bohr inequality for certain classes of harmonic mappings on simple connected domain $ \Omega_{\gamma} $. A sense-preserving homeomorphism $f$ from the unit disk $\mathbb{D}$ onto $\Omega^{\prime}$ is said to be a $K$-quasiconformal mapping if, for $z\in\mathbb{D},$
\begin{align*}
	\dfrac{|f_z|+|f_{\overline{z}}|}{|f_z|-|f_{\overline{z}}|}=\dfrac{1+|\omega_f(z)|}{1-|\omega_f(z)|}\leq K,\;\mbox{i.e.},\; |\omega_f(z)|\leq k=\dfrac{K-1}{K+1},
\end{align*} 
where $K\geq 1$ so that $k\in [0,1)$ (see\cite{Duren-2004}). \vspace{1.5mm}

For the study of Bohr phenomenon for the class of $K$-quasiconformal harmonic mappings on $\mathbb{D}$ and several sharp Bohr inequalities, we refer to the articles \cite{Kay & Pon & Sha & MN & 2018,Ponnusamy-Vijayakumar,Ponnusamy-Vijayakumar-Wirths-JMAA-2022,Liu-Ponnusamy-BMMS-2019,Liu-Ponnusamy-Wang-RACSAM-2020,MBA-Sabir-MJM-2024}.\vspace{1.2mm}

We obtain the following result as a generalization of the Bohr inequality for $K$- quasiconformal sense-preserving harmonic mappings in $\mathbb{D}.$
\begin{thm}\label{BS-thm-3.111}
	Assume that $p\in (0,2]$ and let $f=h+\overline{g}=\sum_{n=0}^{\infty}a_nz^n+\overline{\sum_{n=1}^{\infty}b_nz^n}$  be a harmonic mapping in $\Omega_{\gamma}$ with $|h(z)|\leq 1$ on $\Omega_{\gamma}.$  If $h(z)=\sum_{n=0}^{\infty} a_nz^n$  and $g(z)=\sum_{n=0}^{\infty} b_nz^n$ in $\mathbb{D}$ and $|g^{\prime}(z)|\leq k|h^{\prime}(z)|$ for some $k\in [0,1),$ and $\{ \varphi_n(r)\}\in \mathcal{F}$ satisfies the inequality 
	\begin{align}\label{eqe-3.1}
		2(1+k)\sum_{n=1}^{\infty}\varphi_n(r)< p(\gamma+1)\varphi_0(r), 
	\end{align}
	then the following sharp inequality holds: 
	\begin{align*}
		\mathcal{N}_{ \varphi,f, p}(r):=|a_0|^p\varphi_0(r)+\sum_{n=1}^{\infty}\left(|a_n|+|b_n|\right)\varphi_n(r)\leq \varphi_0(r)\;\;\mbox{for}\;\; r\leq R^{\gamma,k}_{p,\varphi},
	\end{align*} 
	where $R^{\gamma,k}_{p,\varphi}$ is the minimal positive root in $(0,1)$ of the equation
	\begin{align}\label{eqe-3.2}
		2(1+k)\sum_{n=1}^{\infty}\varphi_n(r)= p(\gamma+1)\varphi_0(r). 
	\end{align}
	In the case when 
	\begin{align}\label{eqqq-3.3}
		2(1+k)\sum_{n=1}^{\infty}\varphi_n(r)> p(\gamma+1)\varphi_0(r), 
	\end{align}
	in some interval $(R^{\gamma,k}_{p,\varphi},R^{\gamma,k}_{p,\varphi}+\epsilon)$, the number $R^{\gamma,k}_{p,\varphi}$ cannot be improved.
	\end{thm}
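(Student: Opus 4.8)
The plan is to mimic the structure of the proof of Theorem \ref{BS-thm-3.1}, replacing the single analytic bound with the combined harmonic data coming from $h$ and from the dilation condition $|g'(z)|\le k|h'(z)|$. First I would bound the coefficients $b_n$ of $g$ in terms of those of $h$: writing $g'=\omega\,h'$ with $|\omega|\le k$ in $\mathbb{D}$, a standard argument with the coefficient functional (or direct estimation of Taylor coefficients of a product of a bounded analytic function and $h'$, combined with Lemma A applied to $h\in\mathcal{B}(\Omega_\gamma)$) should give $|b_n|\le k\,(1-|a_0|^2)/(1+\gamma)$ for $n\ge 1$; the same Lemma A also gives $|a_n|\le (1-|a_0|^2)/(1+\gamma)$. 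Hence $|a_n|+|b_n|\le (1+k)(1-|a_0|^2)/(1+\gamma)$ for all $n\ge 1$, which is the key coefficient estimate that feeds the whole argument.

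Next I would substitute this into $\mathcal{N}_{\varphi,f,p}(r)$. Setting $a=|a_0|<1$,
\begin{align*}
\mathcal{N}_{\varphi,f,p}(r)&\le a^p\varphi_0(r)+\frac{(1+k)(1-a^2)}{1+\gamma}\sum_{n=1}^{\infty}\varphi_n(r)\\
&=\varphi_0(r)+\frac{(1+k)(1-a^2)}{1+\gamma}\left(\sum_{n=1}^{\infty}\varphi_n(r)-\frac{(1-a^p)(1+\gamma)}{(1+k)(1-a^2)}\varphi_0(r)\right).
\end{align*}
Then I would invoke the inequality \eqref{eee-2.5}, namely $(1-t^p)/(1-t^2)\ge p/2$ for $t\in[0,1)$ and $p\in(0,2]$, to deduce
\begin{align*}
\mathcal{N}_{\varphi,f,p}(r)\le\varphi_0(r)+\frac{(1+k)(1-a^2)}{1+\gamma}\left(\sum_{n=1}^{\infty}\varphi_n(r)-\frac{p(1+\gamma)}{2(1+k)}\varphi_0(r)\right).
\end{align*}
The bracketed quantity is non-positive exactly when $2(1+k)\sum_{n=1}^{\infty}\varphi_n(r)\le p(\gamma+1)\varphi_0(r)$, i.e. for $r\le R^{\gamma,k}_{p,\varphi}$ by the definition \eqref{eqe-3.2} of the root and the hypothesis \eqref{eqe-3.1}; this yields $\mathcal{N}_{\varphi,f,p}(r)\le\varphi_0(r)$ on that range. (Here $a\to 1^-$ is used only to see that the worst case of the bracket is the one involving no $a$; more precisely the bracket is a decreasing function of $a^2$ so its supremum over $a\in[0,1)$ is the limit as $a\to1^-$.)

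For sharpness I would again use the extremal function $h_a$ from \eqref{eeee-2.6}, now paired with a dilation $\omega$ of modulus $\le k$; the natural choice is $g'=k\,h_a'$, i.e. $g(z)=k(h_a(z)-a_0)$ up to a unimodular constant, so that $|b_n|=k|a_n|$ exactly, with the $a_n$ as in \eqref{eee-2.7}. Plugging this into $\mathcal{N}_{\varphi,h_a+\overline g,p}(r)$ and expanding in powers of $1-a$ exactly as in the proof of Theorem \ref{BS-thm-3.1}, the leading term is
\begin{align*}
\mathcal{N}_{\varphi,h_a+\overline g,p}(r)=\varphi_0(r)+\frac{(1-a)(1+k)}{1+\gamma}\left(2\sum_{n=1}^{\infty}\varphi_n(r)-\frac{p(1+\gamma)}{1+k}\varphi_0(r)\right)+O((1-a)^2),
\end{align*}
so under \eqref{eqqq-3.3} on $(R^{\gamma,k}_{p,\varphi},R^{\gamma,k}_{p,\varphi}+\epsilon)$ the right-hand side exceeds $\varphi_0(r)$ for $a$ close to $1$, proving $R^{\gamma,k}_{p,\varphi}$ cannot be enlarged.

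The main obstacle I anticipate is the coefficient bound $|b_n|\le k(1-|a_0|^2)/(1+\gamma)$: since $h$ is only bounded on $\Omega_\gamma$ (not on $\mathbb{D}$) and $g$ is tied to $h$ only through $g'=\omega h'$ on $\mathbb{D}$, one must argue carefully that differentiating the series, multiplying by the Schwarz-type function $\omega$, and integrating back does not lose the factor $k$ or the sharp constant $1/(1+\gamma)$ from Lemma A. The cleanest route is probably to observe that $g'/k$ is a ratio dominated by $h'$ and apply the same Schur–Wiener/Lemma A machinery that produced Lemma A in \cite{Evdoridis-Ponnusamy-Rasila-RM-2021}, or alternatively to note that for fixed $r<1$ the area/coefficient estimate $\sum_n n^2(|a_n|^2-|b_n|^2)r^{2n}$ type control is not needed — only the termwise bound — so a direct Cauchy-estimate on $g'=\omega h'$ combined with $|\omega|\le k$ and the bound on $h'$ from Lemma A suffices. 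Everything after that is a routine repetition of the Theorem \ref{BS-thm-3.1} argument with $(1+\gamma)$ replaced by $(1+\gamma)/(1+k)$ in the relevant places.
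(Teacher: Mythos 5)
Your overall architecture is the same as the paper's: reduce everything to a bound of the form $(1+k)\frac{1-a^2}{1+\gamma}\Phi_1(r)$ for the tail, invoke $(1-t^p)/(1-t^2)\ge p/2$, and test sharpness with $h_a$ from \eqref{eeee-2.6} paired with $g=k\lambda(h_a-a_0)$. The sharpness analysis and the final comparison with the root of \eqref{eqe-3.2} are essentially identical to the paper's. But there is a genuine gap at the step you yourself flag as the main obstacle: the termwise coefficient bound $|b_n|\le k(1-|a_0|^2)/(1+\gamma)$ is neither proved nor needed, and the routes you sketch for it do not work. Writing $g'=\omega h'$ with $|\omega|\le k$ gives $nb_n=\sum_{j=0}^{n-1}\omega_j\,(n-j)a_{n-j}$, and knowing only $|a_m|\le(1-|a_0|^2)/(1+\gamma)$ from Lemma A does not control this sum by $kn(1-|a_0|^2)/(1+\gamma)$, since $\sum_j|\omega_j|$ is unbounded over the ball of $H^\infty$. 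Your fallback, a ``direct Cauchy-estimate on $g'=\omega h'$,'' provably fails: combining the Cauchy integral for $nb_n$ with the Schwarz--Pick bound $M_1(r,h')\le(1-r^2)^{-1}$ and optimizing in $r$ yields at best $|b_n|\lesssim \tfrac{e}{2}k$, which loses the sharp constant, while cruder sup-norm estimates lose a factor growing with $n$.

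What the paper does instead — and what you should do — is avoid any termwise bound on $b_n$ entirely. Lemma \ref{Lem-3.2} converts the quasiconformality hypothesis into the weighted $\ell^2$ comparison $\sum_{n\ge1}|b_n|^2\varphi_n(r)\le k^2\sum_{n\ge1}|a_n|^2\varphi_n(r)$, and then the Cauchy--Schwarz inequality together with Lemma A applied to the $|a_n|$ gives the aggregate estimate
\begin{align*}
\sum_{n=1}^{\infty}|b_n|\varphi_n(r)\le\Bigl(\sum_{n=1}^{\infty}|b_n|^2\varphi_n(r)\Bigr)^{1/2}\Bigl(\sum_{n=1}^{\infty}\varphi_n(r)\Bigr)^{1/2}\le k\,\frac{1-|a_0|^2}{1+\gamma}\,\Phi_1(r),
\end{align*}
which is exactly the quantity your argument requires. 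With this substitution the remainder of your proposal (the $p/2$ inequality, the limit $a\to1^-$, and the extremal family) goes through as in the paper.
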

	As a consequence of Theorem \ref{BS-thm-3.111}, in particular if $\varphi_n(r)=r^n$, then we obtain the following corollary which is exactly the result \cite[Theorem 4]{Evdoridis-Ponnusamy-Rasila-RM-2021}. 
	\begin{cor} (see \cite[Theorem 4]{Evdoridis-Ponnusamy-Rasila-RM-2021}\label{Cor-4.1})
		Let $f=h+\overline{g}=\sum_{n=0}^{\infty}a_nz^n+\overline{\sum_{n=1}^{\infty}b_nz^n}$  be a harmonic mapping in $\Omega_{\gamma}$ with $|h(z)|\leq 1$ on $\Omega_{\gamma}.$  If $h(z)=\sum_{n=0}^{\infty} a_nz^n$  and $g(z)=\sum_{n=0}^{\infty} b_nz^n$ in $\mathbb{D}$ and $|g^{\prime}(z)|\leq k|h^{\prime}(z)|$ for some $k\in [0,1),$ then
		\begin{align*}
			|a_0|+\sum_{n=1}^{\infty}\left(|a_n|+|b_n|\right)r^n\leq 1\;\;\mbox{for}\;\; r\leq R^k_{1, \gamma}=\dfrac{1+\gamma}{3+2k+\gamma}.
		\end{align*}  
		The radius $R^k_{1, \gamma}$ is the best possible.
	\end{cor}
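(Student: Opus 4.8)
The plan is to obtain this corollary as a direct specialization of Theorem \ref{BS-thm-3.111}, choosing the sequence $\varphi_n(r)=r^n$ (which evidently lies in $\mathcal{F}$, since $\sum_{n=0}^{\infty}r^n$ converges locally uniformly on $[0,1)$) together with $p=1$. Under this choice one has $\varphi_0(r)=1$ and $\sum_{n=1}^{\infty}\varphi_n(r)=r/(1-r)$, so the weighted majorant $\mathcal{N}_{\varphi,f,1}(r)$ collapses precisely to the classical harmonic Bohr sum $|a_0|+\sum_{n=1}^{\infty}(|a_n|+|b_n|)r^n$ of the statement. Hence the conclusion of Theorem \ref{BS-thm-3.111} reads verbatim as the desired bound $\le 1$, and the only remaining tasks are to identify the radius explicitly and to confirm sharpness.

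First I would translate the hypothesis \eqref{eqe-3.1} into the present setting, where it becomes $2(1+k)\,r/(1-r)<\gamma+1$; this holds on an initial subinterval of $[0,1)$ because the left-hand side vanishes at $r=0$ and is strictly increasing (its derivative is $2(1+k)/(1-r)^2>0$). The threshold $R^{\gamma,k}_{p,\varphi}$ is by definition the minimal positive root of \eqref{eqe-3.2}, i.e.\ of $2(1+k)\,r/(1-r)=\gamma+1$. Clearing the denominator yields the linear equation $2(1+k)r=(\gamma+1)(1-r)$, so that $r\,[\,3+2k+\gamma\,]=1+\gamma$ and therefore
\begin{align*}
R^{k}_{1,\gamma}=\frac{1+\gamma}{3+2k+\gamma},
\end{align*}
which is exactly the stated radius; for $\gamma=0$, where $\Omega_0=\mathbb{D}$, this reduces to $1/(3+2k)$, the $K$-quasiconformal analogue of the Bohr radius.

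For the sharpness assertion I would simply invoke the second half of Theorem \ref{BS-thm-3.111}. Because $t\mapsto t/(1-t)$ is strictly increasing on $[0,1)$, for every $r>R^{k}_{1,\gamma}$ one has $2(1+k)\,r/(1-r)>\gamma+1$, which is precisely the reversed inequality \eqref{eqqq-3.3} specialized to $p=1$. Thus the extremality hypothesis of the parent theorem is satisfied on $(R^{k}_{1,\gamma},R^{k}_{1,\gamma}+\epsilon)$, and Theorem \ref{BS-thm-3.111} guarantees that $R^{k}_{1,\gamma}$ cannot be improved. No genuine obstacle arises: the only substantive steps are the elementary geometric summation $\sum_{n\ge1}r^n=r/(1-r)$ and the linear solve for the root, while both the main inequality and its sharpness are inherited directly from Theorem \ref{BS-thm-3.111}.
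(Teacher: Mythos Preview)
Your proposal is correct and follows exactly the route the paper indicates: the corollary is obtained from Theorem~\ref{BS-thm-3.111} by the specialization $\varphi_n(r)=r^n$ (with $p=1$), and your explicit computation of the root of \eqref{eqe-3.2} and verification of \eqref{eqqq-3.3} simply fill in the details the paper leaves implicit.
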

	\begin{rem}
	We see that Corollary \ref{Cor-4.1} generalized the result \cite[Theorem 1.1]{Kay & Pon & Sha & MN & 2018} from unit disk $\mathbb{D}$ to simply connected domain $\Omega_{\gamma}$. In fact, Theorem \ref{BS-thm-3.111} is a two fold generalization of  \cite[Theorem 1.1]{Kay & Pon & Sha & MN & 2018} and \cite[Theorem 4]{Evdoridis-Ponnusamy-Rasila-RM-2021}.
	\end{rem}
It is well-known that the initial term in the Bohr inequality for any class of functions plays a crucial role in determining the Bohr radius. The Bohr inequality with the setting of $ |a_0|^2 $ for $ K $-quasiconformal harmonic mappings on $ \Omega_{\gamma} $ has not been established to date. In this regard, as a consequence of Theorem \ref{BS-thm-3.111}, we obtain the following result finding the precise Bohr radius.
	\begin{cor}\label{Cor-4.2} Assume the assumption of Corollary \ref{Cor-4.1}. Then
		\begin{align*}
			|a_0|^2+\sum_{n=1}^{\infty}\left(|a_n|+|b_n|\right)r^n\leq 1\;\;\mbox{for}\;\; r\leq R^k_{2, \gamma}=\dfrac{1+\gamma}{2+k+\gamma}.
		\end{align*}  
		The radius $R^k_{2, \gamma}$ is the best possible.
	\end{cor}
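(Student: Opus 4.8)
The plan is to obtain Corollary \ref{Cor-4.2} as a direct specialization of Theorem \ref{BS-thm-3.111}. First I would put $p=2$ and $\varphi_n(r)=r^n$ for all $n\ge 0$; then $\varphi_0(r)=1$ and $\sum_{n=1}^{\infty}\varphi_n(r)=r/(1-r)$, a series converging locally uniformly on $[0,1)$, so $\{\varphi_n(r)\}\in\mathcal{F}$ and the structural hypotheses of Theorem \ref{BS-thm-3.111} (namely $|h(z)|\le 1$ on $\Omega_\gamma$ and $|g'(z)|\le k|h'(z)|$, $k\in[0,1)$) are precisely those inherited from Corollary \ref{Cor-4.1}. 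With these choices the admissibility condition \eqref{eqe-3.1} becomes $2(1+k)\,r/(1-r)<2(\gamma+1)$, i.e. $r(2+k+\gamma)<1+\gamma$, and the defining equation \eqref{eqe-3.2} for $R^{\gamma,k}_{p,\varphi}$ reduces to $2(1+k)\,r/(1-r)=2(\gamma+1)$, whose unique root in $(0,1)$ is $r=(1+\gamma)/(2+k+\gamma)$. Hence $R^{\gamma,k}_{2,\varphi}=R^{k}_{2,\gamma}=(1+\gamma)/(2+k+\gamma)$.

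Next I would simply read off the conclusion of Theorem \ref{BS-thm-3.111} for this data: $\mathcal{N}_{\varphi,f,2}(r)=|a_0|^2+\sum_{n=1}^{\infty}\bigl(|a_n|+|b_n|\bigr)r^n\le\varphi_0(r)=1$ for $r\le R^{k}_{2,\gamma}$, which is exactly the asserted inequality. For the sharpness half, I would check that condition \eqref{eqqq-3.3}, namely $2(1+k)\,r/(1-r)>2(\gamma+1)$, holds precisely when $r>(1+\gamma)/(2+k+\gamma)$, hence on every interval $(R^{k}_{2,\gamma},R^{k}_{2,\gamma}+\epsilon)$; Theorem \ref{BS-thm-3.111} then guarantees that the constant $R^{k}_{2,\gamma}$ cannot be enlarged.

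Because each step is a substitution into an already established theorem, there is no genuine analytic obstacle here; the only points requiring explicit mention are that $\{r^n\}\in\mathcal{F}$ and that the geometric-series identity $\sum_{n\ge 1}r^n=r/(1-r)$ (used both in the admissibility test and in locating the root) is valid throughout $[0,1)$. If one preferred to display the extremal configuration directly rather than quoting the sharpness statement of Theorem \ref{BS-thm-3.111}, one would take $h=h_a$ as in \eqref{eeee-2.6} and choose the co-analytic part $g$ so that $|g'(z)|=k|h'(z)|$ with $|b_n|$ matching the modulus profile of $|a_n|$, and then let $a\to 1^{-}$; this reproduces the $O\bigl((1-a)^2\bigr)$ blow-up computation of the proof of Theorem \ref{BS-thm-3.111} and confirms the optimality of $(1+\gamma)/(2+k+\gamma)$.
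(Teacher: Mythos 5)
Your proposal is correct and follows exactly the route the paper intends: Corollary \ref{Cor-4.2} is obtained by specializing Theorem \ref{BS-thm-3.111} to $p=2$ and $\varphi_n(r)=r^n$, so that the equation $2(1+k)\,r/(1-r)=2(1+\gamma)$ yields $R^{k}_{2,\gamma}=(1+\gamma)/(2+k+\gamma)$, with sharpness inherited from the theorem's extremal function $h_a$. The paper gives no separate argument beyond this specialization, and your computations of the root and of the failure of the admissibility inequality beyond it are accurate.
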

	\begin{rem}
		Corollary \ref{Cor-4.2} is a generalized version of the result \cite[Theore 1.2]{Kay & Pon & Sha & MN & 2018} from unit disk $\mathbb{D}$ to simply connected domain $\Omega_{\gamma}$. 
	\end{rem}
	Further for $k\to 1$ which corresponds to the limiting case $K\to\infty$ in Theorem \ref{BS-thm-3.111}, we obtain the following result for sense-preserving harmonic mappings.
	\begin{cor}
		Assume that $p\in (0,2]$ and let $f=h+\overline{g}=\sum_{n=0}^{\infty}a_nz^n+\overline{\sum_{n=1}^{\infty}b_nz^n}$  be a harmonic mapping in $\Omega_{\gamma}$ with $|h(z)|\leq 1$ on $\Omega_{\gamma}.$  If $h(z)=\sum_{n=0}^{\infty} a_nz^n$  and $g(z)=\sum_{n=0}^{\infty} b_nz^n$ in $\mathbb{D}$ and $f=h+\overline{g}$ is sense-preserving in $\mathbb{D}$ and $\{ \varphi_n(r)\}\in \mathcal{F}$ satisfies the inequality 
		\begin{align*}
		4\sum_{n=1}^{\infty}\varphi_n(r)< p(\gamma+1)\varphi_0(r), 
		\end{align*}
		then the following sharp inequality holds: 
		\begin{align*}
			|a_0|^p\varphi_0(r)+\sum_{n=1}^{\infty}\left(|a_n|+|b_n|\right)\varphi_n(r)\leq \varphi_0(r)\;\;\mbox{for}\;\; r\leq R^{\gamma,1}_{p,\varphi},
		\end{align*} 
		where $R^{\gamma,1}_{p,\varphi}$ is the minimal positive root in $(0,1)$ of the equation
		\begin{align*}
			4\sum_{n=1}^{\infty}\varphi_n(r)= p(\gamma+1)\varphi_0(r). 
		\end{align*}
		In the case when 
		\begin{align*}
			4\sum_{n=1}^{\infty}\varphi_n(r)> p(\gamma+1)\varphi_0(r), 
		\end{align*}
		in some interval $(R^{\gamma,1}_{p,\varphi},R^{\gamma,1}_{p,\varphi}+\epsilon)$, the number $R^{\gamma,1}_{p,\varphi}$ cannot be improved.
	\end{cor}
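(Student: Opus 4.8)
The plan is to derive this corollary from Theorem~\ref{BS-thm-3.111} by letting the dilation bound $k$ tend to $1^{-}$. Recall that, in the terminology of Section~4, saying that $f=h+\overline{g}$ is sense-preserving in $\mathbb{D}$ means $|J_f(z)|=|h'(z)|^{2}-|g'(z)|^{2}>0$ for $z\in\mathbb{D}$, that is, $|\omega_f(z)|=|g'(z)/h'(z)|<1$ for every $z\in\mathbb{D}$. This is strictly weaker than the hypothesis of Theorem~\ref{BS-thm-3.111}, which requires a single constant $k\in[0,1)$ with $|g'|\le k|h'|$ on all of $\mathbb{D}$; hence the theorem does not apply verbatim and a truncation is needed. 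Passing from pointwise to uniform control of the dilation is the only genuinely delicate point here --- the remainder is bookkeeping.

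For $\rho\in(0,1)$ I would put $h_\rho(z):=h(\rho z)$, $g_\rho(z):=g(\rho z)$ and $f_\rho:=h_\rho+\overline{g_\rho}$. Since $\Omega_{\gamma}$ is a Euclidean disk containing the origin, it is convex and hence star-shaped about $0$, so $\rho z\in\Omega_{\gamma}$ whenever $z\in\Omega_{\gamma}$; thus $h_\rho$ is analytic on $\Omega_{\gamma}$ with $|h_\rho|\le 1$ there, while $\omega_{f_\rho}(z)=\omega_f(\rho z)$ satisfies $|\omega_{f_\rho}(z)|\le k_\rho:=\max_{|w|\le\rho}|\omega_f(w)|<1$ by the maximum principle, so $f_\rho$ is $K_\rho$-quasiconformal with $k_\rho\in[0,1)$. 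Now fix $r\le R^{\gamma,1}_{p,\varphi}$ and assume, as in the statement, that $4\sum_{n\ge1}\varphi_n<p(\gamma+1)\varphi_0$. Because $2(1+k_\rho)<4$, the sequence $\{\varphi_n\}$ also satisfies \eqref{eqe-3.1} with $k=k_\rho$, and comparing the ratio $\sum_{n\ge1}\varphi_n/\varphi_0$ with the constants $p(\gamma+1)/\bigl(2(1+k)\bigr)$ shows that the minimal root of \eqref{eqe-3.2} is nonincreasing in $k$; hence $R^{\gamma,k_\rho}_{p,\varphi}\ge R^{\gamma,1}_{p,\varphi}\ge r$. Applying Theorem~\ref{BS-thm-3.111} to $f_\rho$, whose power-series coefficients are $a_n\rho^{\,n}$ and $b_n\rho^{\,n}$ while $h_\rho(0)=a_0$, we get
\[
|a_0|^{p}\varphi_0(r)+\sum_{n=1}^{\infty}\bigl(|a_n|+|b_n|\bigr)\rho^{\,n}\varphi_n(r)=\mathcal{N}_{\varphi,f_\rho,p}(r)\le\varphi_0(r).
\]
Letting $\rho\to1^{-}$, the left-hand side increases to $\mathcal{N}_{\varphi,f,p}(r)$ by the monotone convergence theorem (each $\rho^{\,n}\varphi_n(r)$ is nondecreasing in $\rho$ and all terms are nonnegative), so $\mathcal{N}_{\varphi,f,p}(r)\le\varphi_0(r)$ for every $r\le R^{\gamma,1}_{p,\varphi}$, which is the asserted inequality.

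For the sharpness I would use extremal functions built from the map $h_a$ of \eqref{eeee-2.6}. Given $a,\lambda\in(0,1)$, set $f=h_a+\overline{\lambda\,(h_a-a_0)}$, where $a_0=(a-\gamma)/(1-a\gamma)$; since $(h_a-a_0)'=h_a'$ one has $\omega_f\equiv\lambda$, so $|\omega_f|=\lambda<1$, $f$ is sense-preserving, and $|h_a|\le1$ on $\Omega_{\gamma}$. From $h_a(z)=a_0-\sum_{n\ge1}a_nz^{n}$ with the positive coefficients $a_n$ of \eqref{eee-2.7}, the co-analytic part of $f$ has coefficients $-\lambda a_n$, so $|a_n|+|b_n|=(1+\lambda)a_n$ for $n\ge1$. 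Repeating the computation of the sharpness part of the proof of Theorem~\ref{BS-thm-3.1} --- now with the co-analytic contribution inserting an extra factor $1+\lambda$, and using that $1-a_0$ and every $a_n$ in \eqref{eee-2.7} are of order $1-a$ as $a\to1^{-}$ together with \eqref{eee-2.5} --- produces, for some positive constant $c$ independent of $a$ and $\lambda$,
\[
\mathcal{N}_{\varphi,f,p}(r)=\varphi_0(r)+c\,(1-a)\Bigl(2(1+\lambda)\sum_{n=1}^{\infty}\varphi_n(r)-p(1+\gamma)\varphi_0(r)\Bigr)+O\bigl((1-a)^{2}\bigr).
\]
For $r\in\bigl(R^{\gamma,1}_{p,\varphi},\,R^{\gamma,1}_{p,\varphi}+\epsilon\bigr)$ we have $4\sum_{n\ge1}\varphi_n(r)>p(\gamma+1)\varphi_0(r)$; letting first $\lambda\to1^{-}$ and then $a\to1^{-}$ turns the parenthesis positive, so $\mathcal{N}_{\varphi,f,p}(r)>\varphi_0(r)$, proving that $R^{\gamma,1}_{p,\varphi}$ cannot be enlarged. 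Thus the corollary follows, the main obstacle being --- as flagged above --- the rigorous justification of the limit $k\to1^{-}$, handled by the dilation $f_\rho$.
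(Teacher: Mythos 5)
Your proof is correct and follows the route the paper intends: the paper states this corollary without proof, simply as the limiting case $k\to 1^-$ of Theorem \ref{BS-thm-3.111}, and your argument is a rigorous implementation of exactly that idea, with the sharpness part reproducing the paper's own extremal construction $h_a+\overline{\lambda(h_a-a_0)}$. The one genuine addition is your dilation step $f_\rho(z)=h(\rho z)+\overline{g(\rho z)}$, which correctly patches the point the paper glosses over --- a sense-preserving map satisfies only the pointwise bound $|\omega_f|<1$ and need not be $K$-quasiconformal for any finite $K$, so Theorem \ref{BS-thm-3.111} cannot be applied directly and the uniform bound $k_\rho<1$ on $\overline{\rho\mathbb{D}}$ together with the monotonicity $R^{\gamma,k_\rho}_{p,\varphi}\ge R^{\gamma,1}_{p,\varphi}$ and a monotone-convergence passage $\rho\to 1^-$ is needed.
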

Our next aim is to find the Bohr inequality for $K$-quasiconformal, sense-preserving harmonic mappings, where the analytic part $h$ is subordinated to some analytic function $\psi$. Our purpose is to explore the cases in which $\psi$ is univalent and convex in $\mathbb{D}$. We present the following result.
\begin{thm}\label{BS-thm-3.2}
Suppose that $f(z)=h(z)+\overline{g(z)}=\sum_{n=0}^{\infty}a_nz^n+\overline{\sum_{n=1}^{\infty}b_nz^n}$ is a $K$- quasiconformal sense-preserving harmonic mapping in $\mathbb{D}$ and $h\prec \psi,$ where $\psi$ is a univalent and convex function in $\mathbb{D}.$ If $\{ \varphi_n(r)\}\in \mathcal{F}$  and satisfies the inequality 
\begin{align*}
2(1+k)\sum_{n=1}^{\infty}\varphi_n(r)< \varphi_0(r), 
\end{align*}
then the following sharp inequality holds:
\begin{align*}
\mathcal{Q}_{\varphi,f}(r):=\sum_{n=1}^{\infty}(|a_n|+|b_n|)\varphi_n(r)\leq \mathrm{dist}\left(\psi(0),\partial\psi(\mathbb{D})\right)\varphi_0(r)\;\;\mbox{for}\;\; |z|=r\leq R_{\varphi},
\end{align*}
where $ k\in [0,1)$ and $ R_{\varphi}$ is the minimal positive root in $(0,1)$ of the equation 
	\begin{align}\label{BS-eq-3.5}
		2(1+k)\sum_{n=1}^{\infty}\varphi_n(r)= \varphi_0(r).
	\end{align}
	In the case when 
	\begin{align*}
		2(1+k)\sum_{n=1}^{\infty}\varphi_n(r)> \varphi_0(r), 
	\end{align*}
	in some interval $(R_{\varphi},R_{\varphi}+\epsilon)$, the number $R_{\varphi}$ cannot be improved.
\end{thm}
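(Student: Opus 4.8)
The plan is to follow the template of the proof of Theorem~\ref{BS-thm-3.111}, with the subordination hypothesis $h\prec\psi$ and the convexity of $\psi$ playing the role that Lemma~A played there. Set $d:=\mathrm{dist}\left(\psi(0),\partial\psi(\mathbb{D})\right)$ and $k:=(K-1)/(K+1)\in[0,1)$, so that the dilation $\omega_f=g'/h'$ satisfies $|\omega_f(z)|\le k<1$ in $\mathbb{D}$.

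The first step is to estimate the Taylor coefficients of the analytic part $h$. Since $h\prec\psi$ with $\psi$ univalent and convex, Rogosinski's theorem on subordination to a convex function gives $|a_n|\le|\psi'(0)|$ for every $n\ge1$; and since the image of a convex univalent map always contains the disc of radius $|\psi'(0)|/2$ about $\psi(0)$ (the ``one-half'' covering theorem for convex mappings), one has $|\psi'(0)|\le 2d$. Hence $|a_n|\le 2d$ for all $n\ge1$. The second step is to pass from $h$ to the co-analytic part $g$: writing the dilation relation $g'(z)=\omega_f(z)h'(z)$ as a Cauchy product of power series and combining the coefficient bound just obtained for $h$ with the standard coefficient estimates for the bounded analytic function $\omega_f$, one obtains $|b_n|\le k\,|\psi'(0)|\le 2kd$ for all $n\ge1$. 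Granting these two bounds, for $|z|=r$ we have
\[
\mathcal{Q}_{\varphi,f}(r)=\sum_{n=1}^{\infty}(|a_n|+|b_n|)\varphi_n(r)\le(1+k)|\psi'(0)|\sum_{n=1}^{\infty}\varphi_n(r)\le 2(1+k)d\sum_{n=1}^{\infty}\varphi_n(r),
\]
and the last expression is $\le d\,\varphi_0(r)$ exactly when $2(1+k)\sum_{n\ge1}\varphi_n(r)\le\varphi_0(r)$, that is, for $r\le R_{\varphi}$, where $R_{\varphi}$ is the least positive root of \eqref{BS-eq-3.5}; the locally uniform convergence of $\sum_n\varphi_n$ on $[0,1)$ guarantees this root exists whenever the strict inequality of the hypothesis holds near $r=0$.

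For the sharpness I would use the explicit map $\psi(z)=h(z)=\dfrac{z}{1-z}$, which is univalent and convex with $\psi(0)=0$, so that $d=\tfrac12$ and $a_n=1$ for all $n\ge1$, together with $g(z)=\dfrac{kz}{1-z}$; then $\omega_f\equiv k$, the harmonic map $f=h+\overline{g}$ is $K$-quasiconformal and sense-preserving, and $b_n=k$ for all $n\ge1$. For this $f$ one gets $\mathcal{Q}_{\varphi,f}(r)=(1+k)\sum_{n\ge1}\varphi_n(r)$, which equals $d\,\varphi_0(r)$ at $r=R_{\varphi}$ and strictly exceeds $d\,\varphi_0(r)$ on $(R_{\varphi},R_{\varphi}+\epsilon)$ whenever $2(1+k)\sum_{n\ge1}\varphi_n(r)>\varphi_0(r)$ there; hence $R_{\varphi}$ cannot be improved. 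The step I expect to be the main obstacle is the second coefficient estimate: one needs the \emph{sharp} constant $k|\psi'(0)|$ for $|b_n|$ (a crude Cauchy-type estimate loses a multiplicative factor, which would spoil the radius), and it is precisely this sharp constant that makes the factor $2(1+k)$, hence the radius $R_{\varphi}$ defined by \eqref{BS-eq-3.5}, the correct one.
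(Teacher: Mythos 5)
Your overall architecture matches the paper's: bound $|a_n|\le|\psi'(0)|\le 2d$ via the subordination--convexity lemma, transfer this to the coefficients of $g$, sum against $\varphi_n(r)$, and exhibit sharpness with the half-plane map ($\psi(z)=z/(1-z)$ versus the paper's $1/(1-z)$; both give $d=1/2$, $a_n=1$, $b_n=k\lambda$, and work equally well). The sharpness discussion is fine.

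The genuine gap is exactly the step you flag as the ``main obstacle'': the claimed termwise bound $|b_n|\le k|\psi'(0)|$ does not follow from writing $g'=\omega_f h'$ as a Cauchy product. That product gives $nb_n=\sum_{j=0}^{n-1}c_j(n-j)a_{n-j}$ with $\omega_f=\sum c_jz^j$, and the ``standard'' estimates $|c_j|\le k$, $|a_m|\le|\psi'(0)|$ only yield $|b_n|\le k|\psi'(0)|(n+1)/2$, which grows with $n$ and destroys the radius; sharpening via $|c_j|\le k(1-|c_0|^2/k^2)$ still fails already for $n=3$. Whether the termwise bound $|b_n|\le 2kd$ is even true is beside the point --- you have not derived it, and the literature does not proceed this way. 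The paper avoids individual coefficients of $g$ entirely: it invokes Lemma \ref{Lem-3.2} (for decreasing $\{\varphi_n\}$) to get the quadratic inequality $\sum_{n\ge1}|b_n|^2\varphi_n(r)\le k^2\sum_{n\ge1}|a_n|^2\varphi_n(r)\le 4d^2k^2\Phi_1(r)$, and then applies the Cauchy--Schwarz inequality to conclude $\sum_{n\ge1}|b_n|\varphi_n(r)\le 2kd\,\Phi_1(r)$ --- a bound on the weighted \emph{sum} only, which is all the theorem needs. You should replace your coefficient-by-coefficient step with this $\ell^2$ argument (or supply an actual proof of the termwise bound, which would be a new result, not a routine estimate).
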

As a consequence of Theorem \ref{BS-thm-3.2}, in particular, when  $\varphi_n(r)=r^n$, we obtain the following corollary. This shows that our result generalizes \cite[Theorem 1]{Liu-Ponnusamy-BMMS-2019}.
\begin{cor}(see \cite[Theorem 1]{Liu-Ponnusamy-BMMS-2019})
Suppose that $f(z)=h(z)+\overline{g(z)}=\sum_{n=0}^{\infty}a_nz^n+\overline{\sum_{n=1}^{\infty}b_nz^n}$ is a $K$-quasiconformal sense-preserving harmonic mapping in $\mathbb{D}$ and $h\prec \psi,$ where $\psi$ is a univalent and convex function in $\mathbb{D}.$ 
Then the following sharp inequality holds:
\begin{align*}
\sum_{n=1}^{\infty}(|a_n|+|b_n|)r^n\leq \mathrm{dist}\left(\psi(0),\partial\psi(\mathbb{D})\right)\;\;\mbox{for}\;\; |z|=r\leq \dfrac{K+1}{5K+1}.
\end{align*}
The constant $(K+1)/(5K+1)$ is sharp.
\end{cor}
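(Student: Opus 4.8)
The plan is to follow the template of the proof of Theorem~\ref{BS-thm-3.111}: estimate the Taylor coefficients of $h$ and of $g$, substitute the bounds into $\mathcal{Q}_{\varphi,f}(r)$, reduce to an inequality among the $\varphi_n$'s, invoke the hypothesis, and then exhibit an explicit extremal mapping for sharpness. Put $d:=\operatorname{dist}\bigl(\psi(0),\partial\psi(\mathbb{D})\bigr)$ and $\psi_1:=\psi'(0)$. Since $h\prec\psi$ with $\psi$ convex univalent, Rogosinski's subordination theorem gives $|a_n|\le|\psi_1|$ for every $n\ge1$, while the covering theorem for convex univalent functions gives $\psi(\mathbb{D})\supseteq\mathbb{D}\bigl(\psi(0),\tfrac12|\psi_1|\bigr)$, hence $|\psi_1|\le2d$; thus $|a_n|\le|\psi_1|\le2d$. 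For the co-analytic part, $K$-quasiconformality and sense-preservation force $h'\ne0$ on $\mathbb{D}$ and produce an analytic dilation $\omega=g'/h'$ with $\|\omega\|_{\infty}\le k=(K-1)/(K+1)$; from $g'=\omega h'$, together with the sharp derivative estimate $|h'(z)|\le|\psi_1|/(1-|z|)^2$ that holds when $h\prec\psi$ with $\psi$ convex, I expect the uniform bound $|b_n|\le k|\psi_1|$, equivalently $|a_n|+|b_n|\le(1+k)|\psi_1|\le2(1+k)d$ for all $n\ge1$.

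Granting these estimates, the inequality follows at once: since $\varphi_n(r)\ge0$,
\[
\mathcal{Q}_{\varphi,f}(r)=\sum_{n=1}^{\infty}\bigl(|a_n|+|b_n|\bigr)\varphi_n(r)\le 2(1+k)\,d\sum_{n=1}^{\infty}\varphi_n(r),
\]
and for $|z|=r\le R_{\varphi}$ the defining relation \eqref{BS-eq-3.5} gives $2(1+k)\sum_{n=1}^{\infty}\varphi_n(r)\le\varphi_0(r)$ (with equality at $R_{\varphi}$), so $\mathcal{Q}_{\varphi,f}(r)\le d\,\varphi_0(r)$. For sharpness I would take the half-plane map $\psi(z)=z/(1-z)$, which is convex univalent with $\psi(0)=0$, $\psi_1=1$ and $\psi(\mathbb{D})=\{w:\operatorname{Re}w>-\tfrac12\}$, so that $d=\tfrac12$; set $h=\psi$, $g=k\psi$, $f=h+\overline g$, whence $f$ is $K$-quasiconformal and sense-preserving with $\omega\equiv k$ and $a_n=1$, $b_n=k$ for $n\ge1$. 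Then $\mathcal{Q}_{\varphi,f}(r)=(1+k)\sum_{n\ge1}\varphi_n(r)$, which equals $\tfrac12\varphi_0(r)=d\,\varphi_0(r)$ at $r=R_{\varphi}$ and, on any interval $(R_{\varphi},R_{\varphi}+\epsilon)$ on which $2(1+k)\sum_{n\ge1}\varphi_n(r)>\varphi_0(r)$, strictly exceeds $d\,\varphi_0(r)$; hence $R_{\varphi}$ cannot be improved. Specializing $\varphi_n(r)=r^n$ (so $\varphi_0\equiv1$, $\sum_{n\ge1}\varphi_n(r)=r/(1-r)$, and $2(1+k)\,r/(1-r)=1$ has root $r=(K+1)/(5K+1)$) recovers the displayed corollary.

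The hard part will be the uniform coefficient bound $|b_n|\le k|\psi_1|$ from the first paragraph. Because the sequence $\{\varphi_n\}$ may be concentrated at a single index, the theorem genuinely forces $|a_n|+|b_n|\le2(1+k)d$ for each individual $n$, so an $\ell^2$-type estimate on $(nb_n)_n$ (for instance the one coming from contractivity of the Toeplitz operator with symbol $\omega$ acting on the coefficients of $zh'(z)$) does not suffice, and a crude Cauchy estimate on $g'$ or on $g(z)-g(0)$ only yields $|b_n|\lesssim k\,n\,|\psi_1|$. Getting the sharp constant should require combining the multiplicative structure $g'=\omega h'$ with the Herglotz-type integral representation of the convex function $\psi$ (equivalently, with the starlikeness of $z\psi'(z)$). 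Once that lemma is in hand, the remaining steps are a routine transcription of the proof of Theorem~\ref{BS-thm-3.111}.
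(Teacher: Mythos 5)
Your overall skeleton (coefficient bounds, summation against $\varphi_n$, specialization $\varphi_n(r)=r^n$, and the extremal pair built from a half-plane map with $g'=k\lambda h'$) matches the paper's proof of Theorem \ref{BS-thm-3.2}, of which the stated corollary is the case $\varphi_n(r)=r^n$; your computation $2(1+k)r/(1-r)=1\Rightarrow r=(K+1)/(5K+1)$ and your sharpness discussion are fine. But there is a genuine gap at exactly the point you flag yourself: you reduce the main inequality to the unproven per-coefficient bound $|b_n|\le k|\psi_1|$, you do not establish it, and the Herglotz/Toeplitz strategy you sketch is not obviously going to deliver it (a Cauchy estimate gives only $O(kn|\psi_1|)$, and it is far from clear that the extremal $\omega$ for the functional $\omega\mapsto nb_n=\sum_{j=0}^{n-1}(n-j)\,\omega_j$ arising from $g'=\omega h'$ is a unimodular constant). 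As written, the proof of the displayed inequality is incomplete.

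Moreover, the premise that forces you into this corner --- that ``an $\ell^2$-type estimate does not suffice'' --- is mistaken, and this is precisely where the paper's route differs. The paper needs no individual bound on $|b_n|$: it invokes Lemma \ref{Lem-3.2}, the \emph{weighted} inequality $\sum_{n\ge1}|b_n|^2\varphi_n(r)\le k^2\sum_{n\ge1}|a_n|^2\varphi_n(r)$, valid for decreasing $\{\varphi_n\}$ (certainly satisfied by $\varphi_n(r)=r^n$), combines it with $|a_n|\le 2d$ from Lemma \ref{Lem-3.1}, and then applies Cauchy--Schwarz in the form
\begin{align*}
\sum_{n=1}^{\infty}|b_n|\varphi_n(r)\le\Big(\sum_{n=1}^{\infty}|b_n|^2\varphi_n(r)\Big)^{1/2}\Big(\sum_{n=1}^{\infty}\varphi_n(r)\Big)^{1/2}\le 2kd\,\Phi_1(r),
\end{align*}
which yields $\mathcal{Q}_{\varphi,f}(r)\le 2(1+k)d\,\Phi_1(r)\le d\,\varphi_0(r)$ for $r\le R_{\varphi}$ by \eqref{BS-eq-3.5}. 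Because the weight $\varphi_n(r)$ sits \emph{inside} the $\ell^2$ sums, Cauchy--Schwarz against $\Phi_1(r)$ loses nothing, and the obstruction you describe (a sequence concentrated at a single index) does not arise. Replacing your treatment of the $b_n$ by this two-line argument closes the gap; the rest of your write-up then goes through.
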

The following lemmas will play key roles in proving the Theorems \ref{BS-thm-3.111} and \ref{BS-thm-3.2}.
	\begin{lem}(see \cite{Ponnusamy-Vijayakumar-Wirths-HJM-2021})\label{Lem-3.2}
	Let $\{\varphi_n(r)\}^{\infty}_{n=0}$ be a decreasing sequence of non negative functions in $[0,r_\varphi)$, and $g(z)=\sum_{n=0}^{\infty}b_nz^n,$ $h(z)=\sum_{n=0}^{\infty}a_nz^n$ analytic in $\mathbb{D}$ such that $|g^{\prime}(z)|\leq k |h^{\prime}(z)|$ in $\mathbb{D}$ and for some $k\in [0,1).$ Then 
	\begin{align*}
		\sum_{n=1}^{\infty}|b_n|^2\varphi_n(r)\leq k^2 \sum_{n=1}^{\infty}|a_n|^2\varphi_n(r)\;\;\mbox{ for}\;\; r\in [0,r_\varphi).
	\end{align*}
\end{lem}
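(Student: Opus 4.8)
\textbf{Proof proposal for Lemma \ref{Lem-3.2}.}
The plan is to turn the pointwise bound $|g'(z)|\le k|h'(z)|$ into a bounded analytic multiplier, extract from it a \emph{truncated} (partial-sum) estimate comparing the coefficients of $g'$ with those of $h'$, and then promote that estimate to the $\varphi$-weighted inequality by a summation by parts that uses nothing about $\{\varphi_n(r)\}$ beyond monotonicity. First I would dispose of the degenerate case $h'\equiv 0$, in which $g'\equiv 0$ and both sides vanish. Otherwise, at each zero of $h'$ the hypothesis forces $g'$ to vanish to at least the same order, so $\omega:=g'/h'$ extends across those zeros to a function analytic on $\mathbb{D}$ with $|\omega(z)|\le k<1$ throughout $\mathbb{D}$, and $g'=\omega h'$.

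Next I would write $h'(z)=\sum_{l\ge 0}\alpha_l z^l$ with $\alpha_l=(l+1)a_{l+1}$ and $g'(z)=\sum_{l\ge 0}\beta_l z^l$ with $\beta_l=(l+1)b_{l+1}$, and let $p_m$ be the $m$-th Taylor polynomial of $h'$. The key step is the truncated estimate
\begin{align*}
\sum_{l=0}^{m}|\beta_l|^2\le k^2\sum_{l=0}^{m}|\alpha_l|^2\qquad\text{for all }m\ge 0 .
\end{align*}
To prove it, observe that the Taylor coefficients of $\omega h'$ of order $\le m$ coincide with those of $\omega p_m$, since $\omega$ carries only non-negative powers; hence, for $0<\rho<1$, Parseval's identity on the circle $|z|=\rho$ together with $|\omega|\le k$ gives
\begin{align*}
\sum_{l=0}^{m}|\beta_l|^2\rho^{2l}\le\frac{1}{2\pi}\int_0^{2\pi}|\omega(\rho e^{i\theta})p_m(\rho e^{i\theta})|^2\,d\theta\le k^2\cdot\frac{1}{2\pi}\int_0^{2\pi}|p_m(\rho e^{i\theta})|^2\,d\theta=k^2\sum_{l=0}^{m}|\alpha_l|^2\rho^{2l},
\end{align*}
and letting $\rho\to 1^{-}$ yields the claim. (In operator terms this is exactly the statement that every leading principal truncation of the analytic Toeplitz operator with symbol $\omega$ has norm at most $\|\omega\|_{H^\infty}\le k$.)

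Finally, I would fix $r\in[0,r_\varphi)$ and set $w_l:=\varphi_{l+1}(r)/(l+1)^2$. Since $\{\varphi_n(r)\}$ is non-negative and non-increasing and $\{1/(l+1)^2\}$ is positive and decreasing, $\{w_l\}$ is non-negative, non-increasing, and tends to $0$, so $w_l=\sum_{m\ge l}(w_m-w_{m+1})$ with all differences non-negative. Interchanging the order of summation (legitimate because every term is non-negative) and inserting the truncated estimate,
\begin{align*}
\sum_{l\ge 0}|\beta_l|^2 w_l=\sum_{m\ge 0}(w_m-w_{m+1})\sum_{l=0}^{m}|\beta_l|^2\le k^2\sum_{m\ge 0}(w_m-w_{m+1})\sum_{l=0}^{m}|\alpha_l|^2=k^2\sum_{l\ge 0}|\alpha_l|^2 w_l .
\end{align*}
Because $|\beta_l|^2 w_l=|b_{l+1}|^2\varphi_{l+1}(r)$ and $|\alpha_l|^2 w_l=|a_{l+1}|^2\varphi_{l+1}(r)$, reindexing by $n=l+1$ converts this into $\sum_{n\ge 1}|b_n|^2\varphi_n(r)\le k^2\sum_{n\ge 1}|a_n|^2\varphi_n(r)$, which is the assertion.

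The step I expect to demand the most care is the truncated coefficient estimate — in particular, the observation that replacing $h'$ by its degree-$m$ Taylor polynomial does not change the first $m+1$ coefficients of the product $\omega h'$, which is precisely what allows the $H^\infty$-multiplier bound to be applied to each initial block of coefficients. The removable-singularity bookkeeping for $\omega=g'/h'$ is standard, and once the family of truncated estimates is available the summation-by-parts reduction is routine. It is essential, though, that the weight sequence be monotone: the $\varphi$-weighted conclusion does not follow from the single full-sum bound $\sum_l|\beta_l|^2\le k^2\sum_l|\alpha_l|^2$ alone (a sequence that is ``front-loaded'' could satisfy that bound while violating the weighted one), so the whole family of truncated estimates, combined with the monotonicity of $\{\varphi_n(r)\}$, is genuinely needed.
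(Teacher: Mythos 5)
Your proof is correct. The paper itself gives no argument for this lemma --- it is imported verbatim from \cite{Ponnusamy-Vijayakumar-Wirths-HJM-2021} --- and your route (remove the singularities of $\omega=g'/h'$, prove the truncated coefficient inequality $\sum_{l=0}^{m}|\beta_l|^2\le k^2\sum_{l=0}^{m}|\alpha_l|^2$ via Parseval and the $H^\infty$ bound on $\omega$, then pass to the $\varphi$-weighted sums by Abel summation against the non-increasing weights $\varphi_{l+1}(r)/(l+1)^2$) is precisely the standard proof given in that source, including your correct observation that the full family of partial-sum estimates, not just the full-sum bound, is what the monotonicity of $\{\varphi_n(r)\}$ is needed for.
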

\begin{lem}(see \cite{Duren-1983,Liu-Ponnusamy-BMMS-2019})\label{Lem-3.1}
	Let $\psi$ be an analytic univalent map from $\mathbb{D}$ onto a convex domain $\Omega=\psi(\mathbb{D}).$ Then
	\begin{align*}
		\dfrac{1}{2}|\psi^{\prime}(0)|\leq \mathrm{dist}\left(\psi(0), \partial\Omega\right)\leq |\psi^{\prime}(0)|.
	\end{align*}
	If $g(z)=\sum_{n=0}^{\infty}b_nz^n\prec \psi(z),$ then
	\begin{align*}
		|b_n|\leq |\psi^{\prime}(0)|\leq 2 \mathrm{dist}\left(\psi(0), \partial\Omega\right).
	\end{align*}
\end{lem}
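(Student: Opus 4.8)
The plan is to treat the two distance inequalities and the coefficient inequality separately, after a single normalization. Write $w_0 = \psi(0)$ and $c = \psi'(0) \neq 0$ (nonzero since $\psi$ is univalent), and set $\phi(z) = (\psi(z) - w_0)/c$. Then $\phi$ is again univalent with $\phi(0) = 0$, $\phi'(0) = 1$, and its image $\Omega' = \phi(\mathbb{D})$ is convex, being the affine image $(\Omega - w_0)/c$ of the convex domain $\Omega$. Since $\mathrm{dist}(w_0, \partial\Omega) = |c|\,\mathrm{dist}(0,\partial\Omega')$, the whole lemma reduces to proving $\tfrac12 \le \mathrm{dist}(0,\partial\Omega') \le 1$ and, for the subordination part, $|B_n| \le 1$ for the coefficients of the normalized competitor.

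For the upper distance bound I would argue by the Schwarz lemma. Let $d' = \mathrm{dist}(0,\partial\Omega')$, so the disk $\{|w| < d'\}$ is contained in $\Omega' = \phi(\mathbb{D})$ and the branch $\phi^{-1}$ is defined there with $\phi^{-1}(0) = 0$. The map $F(z) = \phi^{-1}(d' z)$ then sends $\mathbb{D}$ into $\mathbb{D}$ and fixes the origin, so Schwarz gives $|F'(0)| = d'\,|(\phi^{-1})'(0)| = d' \le 1$, i.e. $\mathrm{dist}(w_0,\partial\Omega) \le |c| = |\psi'(0)|$. For the lower bound I would use convexity together with the Carath\'eodory coefficient estimate. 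Choose a nearest boundary point $w^{\ast} \in \partial\Omega$, $|w^{\ast} - w_0| = d := \mathrm{dist}(w_0,\partial\Omega)$, and write $w^{\ast} - w_0 = d\,e^{i\beta}$. By convexity there is a supporting line of $\Omega$ at $w^{\ast}$, and since $w^{\ast}$ is nearest, that line is perpendicular to the segment $[w_0,w^{\ast}]$; hence $\Omega$ lies in the half-plane $\{w : \mathrm{Re}[(w-w_0)e^{-i\beta}] < d\}$. Consequently $P(z) := 1 - (\psi(z)-w_0)e^{-i\beta}/d$ satisfies $\mathrm{Re}\,P > 0$ and $P(0) = 1$, so the Carath\'eodory inequality $|P'(0)| \le 2$ yields $|\psi'(0)|/d \le 2$, that is $\mathrm{dist}(w_0,\partial\Omega) \ge \tfrac12|\psi'(0)|$. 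This simultaneously proves the left inequality of the lemma and the rearranged form $|\psi'(0)| \le 2\,\mathrm{dist}(\psi(0),\partial\Omega)$ appearing on the right of the coefficient chain.

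For the coefficient bound I would invoke Rogosinski's subordination principle for convex majorants. Writing the subordination $g \prec \psi$ as $g = \psi \circ \omega$ with $\omega$ a Schwarz function ($\omega(0)=0$, $|\omega|<1$) and passing to $G = (g-w_0)/c = \phi\circ\omega$, it suffices to show each Taylor coefficient $B_n$ of $G$ satisfies $|B_n| \le 1$, since $b_n = c\,B_n$ for $n\ge 1$. The mechanism is that $\omega(\mathbb{D}_r) \subseteq \mathbb{D}_r$ forces the containment $G(\mathbb{D}_r) = \phi(\omega(\mathbb{D}_r)) \subseteq \phi(\mathbb{D}_r)$ for every $r<1$, and then convexity of the image $\phi(\mathbb{D}_r)$ controls the Taylor moments: writing $B_n r^n = \frac{1}{2\pi}\int_0^{2\pi} G(re^{i\theta})e^{-in\theta}\,d\theta$ and estimating this moment of a curve lying in a convex body through its support function gives $|B_n| \le 1$. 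This is exactly Rogosinski's theorem (Duren 1983), which I would cite rather than re-derive in full. Combining $|b_n| \le |\psi'(0)|$ with the lower distance bound then gives the displayed chain $|b_n| \le |\psi'(0)| \le 2\,\mathrm{dist}(\psi(0),\partial\Omega)$.

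The genuinely nontrivial step is the coefficient inequality $|B_n|\le 1$: the two distance estimates are one-line consequences of the Schwarz and Carath\'eodory lemmas once the supporting-line geometry is set up, whereas the passage from the image containment to a uniform coefficient bound is precisely where convexity (not merely univalence) of $\psi$ is essential---without it only weaker bounds of the type $|b_n|\le n|\psi'(0)|$ survive. I would therefore either reference Rogosinski's theorem directly or, if a self-contained argument is preferred, record the support-function moment estimate as a short auxiliary lemma.
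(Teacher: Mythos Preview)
The paper does not supply a proof of this lemma at all: it is stated with the citation ``(see \cite{Duren-1983,Liu-Ponnusamy-BMMS-2019})'' and used as a black box in the proof of Theorem~\ref{BS-thm-3.2}. So there is no in-paper argument to compare against.

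Your proof is correct and follows the standard textbook route one finds in the cited sources. The normalization to $\phi(0)=0$, $\phi'(0)=1$ is the natural reduction; the Schwarz-lemma argument for the upper distance bound and the supporting-line/Carath\'eodory argument for the lower bound are exactly the classical proofs. For the coefficient estimate you correctly identify Rogosinski's subordination theorem for convex majorants as the key input; your informal description of its mechanism (``support-function moment estimate'') is a bit loose as written, but since you propose to cite the result rather than re-derive it, that is harmless. One small comment: you write $g(z)=\sum_{n=0}^\infty b_n z^n$ and then pass to $G=(g-w_0)/c$, asserting $b_n=cB_n$ for $n\ge 1$; this is fine, but note that $b_0=g(0)=\psi(\omega(0))=\psi(0)=w_0$, so the $n=0$ coefficient is already determined and the inequality $|b_0|\le|\psi'(0)|$ need not hold---the lemma as stated is really about $n\ge 1$, which is all that is used downstream.
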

\begin{proof}[\bf Proof of Theorem \ref{BS-thm-3.111}]
	By the assumption, we see that $h$ is analytic with $|h(z)|\leq 1$ on $\Omega_{\gamma}$. In view of Lemma \ref{Lem-3.2}, we obtain
	\begin{align*}
		\sum_{n=1}^{\infty}|b_n|^2\varphi_n(r)\leq k^2 \sum_{n=1}^{\infty}|a_n|^2\varphi_n(r).
	\end{align*}
By applying Lemma A and the well-known Schwarz inequality, a straightforward calculation yields that
	 \begin{align*}
	  	\sum_{n=1}^{\infty}|b_n|\varphi_n(r)&\leq \sqrt{\sum_{n=1}^{\infty}|b_n|^2\varphi_n(r)} \sqrt{\sum_{n=1}^{\infty}\varphi_n(r)}\\&\leq \sqrt{k^2 \sum_{n=1}^{\infty}|a_n|^2\varphi_n(r)}\sqrt{\sum_{n=1}^{\infty}\varphi_n(r)}\\&\leq \sqrt{k^2 \left(\dfrac{(1-|a_0|^2)}{1+\gamma}\right)^2\sum_{n=1}^{\infty}\varphi_n(r)}\sqrt{\sum_{n=1}^{\infty}\varphi_n(r)}\\&\leq k\dfrac{(1-|a_0|^2)}{1+\gamma}\Phi_1(r).
	  \end{align*} 
	 As a result, we obtain 
	  \begin{align*}
	 \mathcal{N}_{\varphi,f}(r)&\leq a^p\varphi_0(r)+(1+k)\dfrac{(1-a^2)}{1+\gamma}\Phi_1(r)\\&=\varphi_0(r)-(1-a^p)\varphi_0(r)+(1+k)\dfrac{(1-a^2)}{1+\gamma}\Phi_1(r)\\&=\varphi_0(r)+\frac{(1-a^2)}{(1+\gamma)}\bigg((1+k)\Phi_1(r)-\bigg(\frac{1-a^p}{1-a^2}\bigg)(1+\gamma)\varphi_0(r)\bigg).
	  \end{align*}
	 By considering the inequality \eqref{eee-2.5}, it can be seen that
	  \begin{align*}
	  	\mathcal{N}_{\varphi,f}(r)&\leq\varphi_0(r)+\frac{(1-a^2)}{(1+\gamma)}\bigg((1+k)\Phi_1(r)-\frac{p}{2}(1+\gamma)\varphi_0(r)\bigg)\\&=\varphi_0(r)+\frac{(1-a^2)}{2(1+\gamma)}\bigg(2(1+k)\Phi_1(r)-p(1+\gamma)\varphi_0(r)\bigg)\\&\leq \varphi_0(r),
	  \end{align*}
	  for $|z|=r\leq R^{\gamma,k}_{p,\varphi},$ where $R^{\gamma,k}_{p,\varphi}$ is the minimal positive root in $(0,1)$ of the equation given by \eqref{eqe-3.2}.\vspace{2mm}
	  
	  To show the sharpness part of the result, we consider the function $f_a(z)=h_a(z)+\overline{g_a(z)}$ in $\Omega_\gamma$ (see  \cite[Theorem 4]{Evdoridis-Ponnusamy-Rasila-RM-2021}), where 
	  $h_a$ is given by \eqref{eeee-2.6}, and $a_n$ $(n\geq 0)$ are given by \eqref{eee-2.7} and
	  \begin{align*}
	  	f_a(z)=k\lambda\left(h_a(z)-a_0\right).
	  \end{align*} 
	 Consequently, it becomes clear that
	  \begin{align*}
	  	 &\mathcal{N}_{\varphi,f_a}(r)\\&=\left(\dfrac{a-\gamma}{1-a\gamma}\right)^p\varphi_0(r)+(1+k\lambda)\sum_{n=1}^{\infty}\dfrac{1-a^2}{a(1-a\gamma)}\left(\dfrac{a(1-\gamma)}{1-a\gamma}\right)^n\varphi_n(r)\\&= \varphi_0(r)+\left(\bigg(\dfrac{a-\gamma}{1-a\gamma}\bigg)^p-1\right)\varphi_0(r)\\&\quad+(1+k\lambda)\dfrac{1-a^2}{a(1-a\gamma)}\sum_{n=1}^{\infty}\left(\dfrac{a(1-\gamma)}{1-a\gamma}\right)^n\varphi_n(r)\\&=\varphi_0(r)+\frac{(1-a)}{(1-\gamma)}\bigg(2(1+k)\Phi_1(r)-p(1+\gamma)\varphi_0(r)\bigg)\\&\quad+\bigg(p(1-a)\frac{1+\gamma}{1-\gamma}+\bigg(\frac{a-\gamma}{1-a\gamma}\bigg)^p-1\bigg)\varphi_0(r)\\&\quad+\frac{(1-a)}{(1-\gamma)}\bigg(-2(1+k)\Phi_1(r)+(1+k\lambda)\dfrac{(1+a)(1-\gamma)}{a(1-a\gamma)}\sum_{n=1}^{\infty}\left(\dfrac{a(1-\gamma)}{1-a\gamma}\right)^n\varphi_n(r)\bigg)\\&=\varphi_0(r)+\frac{(1-a)}{(1-\gamma)}\bigg(2(1+k)\Phi_1(r)-p(1+\gamma)\varphi_0(r)\bigg)+(1-a)A_{a,k,\gamma}(r),
	  \end{align*}
	 where
	 \begin{align*}
	 	&A_{a,k,\gamma}(r)\\&:=\frac{1}{(1-\gamma)}\bigg(-2(1+k)\Phi_1(r)+(1+k\lambda)\dfrac{(1+a)(1-\gamma)}{a(1-a\gamma)}\sum_{n=1}^{\infty}\left(\dfrac{a(1-\gamma)}{1-a\gamma}\right)^n\varphi_n(r)\bigg)\\&\quad+\bigg(\frac{p(1+\gamma)}{1-\gamma}+\frac{1}{1-a}\left(\bigg(\frac{a-\gamma}{1-a\gamma}\bigg)^p-1\right)\bigg)\varphi_0(r).
	 \end{align*}
	  Now taking $\lambda\to1^-$, it is a simple task to verify that 
	  \begin{align*}
	  	\lim\limits_{a\to1^-}&\bigg(-2(1+k)\Phi_1(r)+(1+k)\dfrac{(1+a)(1-\gamma)}{a(1-a\gamma)}\sum_{n=1}^{\infty}\left(\dfrac{a(1-\gamma)}{1-a\gamma}\right)^n\varphi_n(r)\bigg)=0
	  \end{align*}
	  and 
	  \begin{align*}
	  	&\lim\limits_{a\to1^-}\frac{1}{1-a}\left(\bigg(\frac{a-\gamma}{1-a\gamma}\bigg)^p-1\right)\;\;\; \left(\frac{0}{0}\; \mbox{form}\right)\\&=-\lim\limits_{a\to1^-}p\bigg(\frac{a-\gamma}{1-a\gamma}\bigg)^{p-1}\frac{1-\gamma^2}{(1-a\gamma)^2}\;\;\;\; \left(\mbox{by L'H$\hat{o}$pital's rule}\right)\\&=-\frac{p(1+\gamma)}{1-\gamma}.
	  \end{align*}
	  In view of the above estimates with  $\lambda\to1^-$, by a standard computation, we see that
	 \begin{align*}
	\lim\limits_{a\to1^-}A_{a,k,\gamma}(r)=\left(\frac{p(1+\gamma)}{1-\gamma}-\frac{p(1+\gamma)}{1-\gamma}\right)\varphi_0(r)=0.
	 \end{align*}
		Hence, it follows that 
	\begin{align*}
		\mathcal{N}_{\varphi,f_a}(r)=\varphi_0(r)+\frac{(1-a)}{(1-\gamma)}\bigg(2(1+k)\Phi_1(r)-p(1+\gamma)\varphi_0(r)\bigg)+O((1-a)^2)
	\end{align*}
	as $ a $ and $\lambda$ tend to $1^-$. Clearly, in view of \eqref{eeeq-2.4}, it is easy to see that 
$\mathcal{N}_{\varphi,f_a}(r)>\varphi_0(r)$
	when $a$ and $\lambda$ tend to $1^-$ and $r\in\left(R^{\gamma,k}_{p,\varphi},R^{\gamma,k}_{p,\varphi}+\epsilon \right)$, which shows that the radius $R^{\gamma,k}_{p,\varphi}$ cannot be improved further. This completes the proof.
\end{proof}

\begin{proof}[{\bf Proof of Theorem \ref{BS-thm-3.2}}]
	By the assumption $h\prec\psi$ and $\psi(\mathbb{D})$ is convex. 
	By the Lemma \ref{Lem-3.1}, we have $|a_n|\leq 2d,$ where  $d:=\left(\psi(0),\partial\psi(\mathbb{D})\right).$ Since $f=h+\overline{g}$ is a $K$- quasiconformal sense-preserving harmonic mapping in $\mathbb{D}$ so that $|g^{\prime}(z)|\leq k|h^{\prime}(z)|$ in $\mathbb{D},$ where $k\in [0,1)$, in view of Lemma \ref{Lem-3.2}, it follows that 
	\begin{align*}
		\sum_{n=1}^{\infty}|b_n|^2\varphi_n(r)\leq k^2 \sum_{n=1}^{\infty}|a_n|^2\varphi_n(r)\leq 4d^2k^2 \sum_{n=1}^{\infty}\varphi_n(r)=4d^2k^2 \Phi_1(r).
	\end{align*}
	Consequently, the classical Schwarz inequality leads to the conclusion that
	\begin{align*}
		\sum_{n=1}^{\infty}|b_n|\varphi_n(r)\leq \sqrt{\sum_{n=1}^{\infty}|b_n|^2\varphi_n(r)} \sqrt{\sum_{n=1}^{\infty}\varphi_n(r)}\leq \sqrt{4k^2d^2\Phi_1(r)}\sqrt{\Phi_1(r)}=2kd\Phi_1(r).
	\end{align*}
	Thus, the inequality
	\begin{align*}
		\mathcal{Q}_{\varphi,f}(r)\leq \sum_{n=1}^{\infty}2d \varphi_n(r)+k\sum_{n=1}^{\infty}2d \varphi_n(r)=2(1+k)d\sum_{n=1}^{\infty}\varphi_n(r)\leq d\varphi_0(r)
	\end{align*}
	holds for $|z|=r\leq R_{\varphi},$ where $R_{\varphi}$ is the minimal root in $(0,1)$ of the equation \eqref{BS-eq-3.5}. With this the desired inequality is established.\vspace{1mm}
	
	Next part of the proof is to show that the sharpness of the radius. Hence, we consider the function $f_a=h_a+\overline{g_a},$ where
	\begin{align*}
		\psi(z)=h_a(z)=\dfrac{1}{1-z}=\sum_{n=0}^{\infty}z^n
		\;\;\mbox{and}\;\; g_a^{\prime}(z)=k\lambda h_a^{\prime}(z), \;|\lambda|=1,\; z\in \mathbb{D}.
	\end{align*}
	It is well-known that $d=\mathrm{dist}\left(\psi(0),\partial\psi(\mathbb{D})\right)=1/2$ (see \cite{Liu-Ponnusamy-BMMS-2019}) and a simple computation shows that
	\begin{align*}
		g_a(z)=\lambda k \dfrac{z}{1-z}=k\lambda \sum_{n=1}^{\infty}z^n.
	\end{align*}
	For the function $f_a,$ we have 
	\begin{align*}
		\mathcal{Q}_{\varphi,f_a}(r)=\sum_{n=1}^{\infty}(1+|\lambda|k)\varphi_n(r)=(1+k)\sum_{n=1}^{\infty}\varphi_n(r)
	\end{align*}
	which is bigger than or equal to $\varphi_0(r)d=\varphi_0(r)/2$ if, and only if,  
	\begin{align*}
		2(1+k)\sum_{n=1}^{\infty}\varphi_n(r)> \varphi_0(r), 
	\end{align*}
	in some interval $(R_{\varphi},R_{\varphi}+\epsilon).$ This shows that the constant $R_{\varphi}$ is sharp. This completes the proof.
\end{proof}

\noindent{\bf Acknowledgment:} The authors would like to thank the referee(s) for the constructive suggestions and comments, which would be contributed to improving the presentation of the paper.

\section{Declarations}

\noindent\textbf{Conflict of interest:} The authors declare that there is no conflict  of interest regarding the publication of this paper.\vspace{1.5mm}

\noindent\textbf{Data availability statement:}  Data sharing not applicable to this article as no datasets were generated or analysed during the current study.\vspace{1.5mm}

\noindent{\bf Code availability:} Not Applicable.\vspace{1.5mm}

\noindent {\bf Authors' contributions:} All the authors have equal contributions.

\end{document}